\renewenvironment{itemize}
  {\begin{list}{$\triangleright$}{%
   \setlength{\parskip}{0mm}
   \setlength{\topsep}{.5\baselineskip}
   \setlength{\rightmargin}{0mm}
   \setlength{\listparindent}{0mm}
   \setlength{\itemindent}{0mm}
   \setlength{\labelwidth}{2.5ex}
   \setlength{\itemsep}{.5\baselineskip}
   \setlength{\parsep}{0mm}
   \setlength{\partopsep}{0mm}
   \setlength{\labelsep}{1ex}
   \setlength{\leftmargin}{\labelwidth+\labelsep}
   }}{%
   \end{list}\vspace*{-1.3mm}}
\def\E{\exists}
\def\A{\forall}
\def\models{\vDash}
\def\Th{\textrm{Th}}
\def\sm{\smallsetminus}
\def\Aut{\textrm{Aut}}
\def\tp{\textrm{tp}}
\def\IMP{\Rightarrow}
\def\IFF{\Leftrightarrow}
\def\imp{\rightarrow}
\def\iff{\leftrightarrow}
\def\niff{\leftrightarrow\llap{\raisebox{.2ex}{{\tiny$/$}}\hskip1.2ex}}
\def\neq{\parbox{1.5ex}{\hfil=\llap{\raisebox{-.1ex}{{\small$/\,$}}}}}
\def\P{\EuScript P}
\def\D{\EuScript D}
\def\X{\EuScript X}
\def\C{\EuScript C}
\def\U{\EuScript U}
\def\B{\EuScript B}
\def\K{\EuScript K}
\def\<{\langle}
\def\>{\rangle}
\def\0{\varnothing}
\def\theta{\vartheta}
\def\phi{\varphi}
\def\epsilon{\varepsilon}
\def\emph#1{{\myblue\boldmath\bfseries #1}}
\def\ssf#1{\textsf{\small #1}}
\newtheoremstyle{mio}
     {2\parskip}
     {0mm}
     {\sl}
     {}
     {\bfseries}
     {}
     {3mm}
     {\llap{\thmnumber{#2}\hskip2mm}\thmname{#1}\thmnote{\bfseries{} #3}}
\newtheoremstyle{liscio}
     {2\parskip}
     {0mm}
     {}
     {}
     {\bfseries}
     {}
     {3mm}
     {\llap{\thmnumber{#2}\hskip2mm}\thmname{#1}\thmnote{\bfseries{}#3}}
\newcounter{thm}
\theoremstyle{mio}
\newtheorem{theorem}[thm]{Theorem}
\newtheorem{corollary}[thm]{Corollary}
\newtheorem{proposition}[thm]{Proposition}
\newtheorem{lemma}[thm]{Lemma}
\newtheorem{fact}[thm]{Fact}
\newtheorem{definition}[thm]{Definition}
\theoremstyle{liscio}
\newtheorem{remark}[thm]{Remark}
\newtheorem{example}[thm]{Example}
\def\QED{\noindent\nolinebreak[4]\hfill\rlap{\ \ $\Box$}\medskip}
\renewenvironment{proof}[1][Proof]%
{\begin{trivlist}\item[\hskip\labelsep\textbf{#1}]}
{\QED\end{trivlist}}
\title{Elementary classes of finite \vc-dimension}
\author{Domenico Zambella}
\date{}
\begin{document}
\raggedbottom

\def\vc{\textsc{vc}}
\def\nip{\textsc{nip}}

\maketitle

\def\ceq#1#2#3{\parbox[t]{25ex}{$\displaystyle #1$}\parbox[t]{5ex}{$\displaystyle\hfil #2$}{$\displaystyle #3$}}

\vskip5ex
\begin{abstract}\noindent
Let $\U$ be a saturated model of inaccessible cardinality, and let $\D\subseteq\U$ be arbitrary. Let  $\<\U,\D\>$ denote the expansion of $\U$ with a new predicate for $\D$. Write $e(\D)$ for the collection of subsets $\C\subseteq\U$ such that $\<\U,\C\>\equiv\<\U,\D\>$. We prove that if  the \vc-dimension of $e(\D)$ is finite then $\D$ is externally definable.
\end{abstract}

\vskip5ex

Let $\U$ be a saturated model of signature $L$, and let $T$ denote its theory and $\kappa$ its cardinality. We require that $\kappa$ is uncountable, inaccessible, and larger than $|L|$. There is no blanket assumption on $T$. Throughout the following $z$ is a tuple of variables of finite length and the letters $\D$ and $\C$ denote arbitrary subsets of $\U^{|z|}$. As usua,l the letters $A,B$, \ldots \@ denote subsets of $\U$ of small cardinality.

Recall that $\D$ is \emph{externally definable\/} if $\D=\D_{p,\phi}$ for some global type $p\in S_x(\U)$ and some $\phi(x,z)\in L$, where 

\ceq{\hfill \D_{p,\phi}}{=}{\big\{a\in\U^{|z|}\ :\ \phi(x,a)\in p\big\}.}

Externally definable sets are ubiquitous in model theory, though they mainly appear in the form of global $\phi\mbox{-}$types (in fact, they are in one-to-one correspondence with these).  One important fact about externally definable sets has been proved by Shelah in~\cite{Sh}, generalizing a theorem of Baisalov and Poizat in~\cite{BP}. Assume $T$ is \nip\ and let $\U^{\text{Sh}}$ be the model obtained by expanding $\U$ with a new predicate for each externally definable set. Then $\Th(\U^{\text{Sh}})$ has quantifier elimination. A few proofs of this result are available, see~\cite{Pi} and~\cite{CS}. The proof in~\cite{CS},  by Chernikov and Simon, is relevant to us because it introduces the notion of \textit{honest definition\/} that will find an application here. The Shelah expansion of groups with \nip\ has been studied in \cite{CPS}.

To any set $\D$ we associate an expansion of $\U$ with a new $|z|\mbox{-}$ary predicate for $z\in\D$. We denote this expansion by $\<\U,\D\>$. We denote by \emph{$e(\D/A)$\/} the set $\big\{\C\ :\ \<\U,\C\>\equiv_A\<\U,\D\>\big\}$. We would like to know if there there are conditions on $e(\D/A)$ that characterize externally definable sets. Note that there are straightforward conditions that characterize definable sets. For example, $\D$ is definable if and only if $|e(\D/A)|=1$ for some $A$. 

By adapting some ideas in \cite{CS} (see also~\cite{Z}), in Corollary~\ref{corol_e(D/A)} we prove a sufficient condition for $\D$ to be externally definable, namely that it suffices that for some set of parameters $A$ the \vc-dimension of $e(\D/A)$ is finite. Though in general this is not a necessary condition, it characterizes external definability when $T$ is \textsc{nip} (see Corollary~\ref{corol_e(D/A)_NIP}). Finally, in the last two sections we use $e(\D)$ in an attempt to generalize the notion of non-dividing to sets.

\section{Notation}

Let $L$ be a first-order language. We consider formulas build inductively from the symbols in $L$ and the atomic formulas \emph{$t\in\X$}, where $\X$ is some second-order variable and $t$ is a tuple of terms. For the the time being, the logical connectives are first-order only (in the last section we will add second-order quantification). The set of all formulas is itself denoted by \emph{$L$\/} or, if parameters from $A$ are allowed, by \emph{$L(A)$}. When a second-order parameter is included (we never need more than one) we write \emph{$L(A;\D)$}. When $\phi(\X)\in L(A)$ and \mbox{$\D\subseteq\U^{|z|}$}, we write $\phi(\D)$ for the formula obtained by replacing $\X$ by $\D$ in $\phi(\X)$. The truth of $\phi(\D)$ is defined in the obvious way. Warning: the meaning of $\phi(\D)$  depends on whether the formula is presented as $\phi(\X)$ or as $\phi(x)$ (see the first paragraph of Section~\ref{approximations}).

We write \emph{$\C\equiv_A\D$\/} if the equivalence $\phi(\C)\iff\phi(\D)$ holds for all $\phi(\X)\in L(A)$. Then the class \emph{$e(\D/A)$\/} defined in the introduction coincides with the set $\big\{\C\subseteq\U^{|z|}\ :\ \C\equiv_A\D\big\}$.

We say that $M$ is \emph{$L(A;\C)\mbox{-}$saturated\/} if every finitely consistent type $p(x)\subseteq L(A;\C)$ is realized in $M$. If $\C$ is such that $\U$ is $L(A;\C)\mbox{-}$saturated for every $A$, we say that $\C$ is \emph{saturated}. In other words, $\C$ is saturated if the expansion $\<\U,\C\>$ is a saturated model.

\begin{proposition}\label{prop_e(D/A_o(D/A)}
For every $\D$ and every $A$ there is a saturated $\C$ such that $\C\equiv_A\D$. Moreover, if $\D$ and $\C$ are both saturated, then there is $f\in\Aut(\U/A)$ that takes $\D$ to $\C$.
\end{proposition}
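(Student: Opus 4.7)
The plan is to treat $\<\U,\D\>$ as an ordinary first-order structure in the expanded language $L^+=L\cup\{P\}$, where $P$ is a fresh $|z|$-ary predicate symbol interpreted as $\D$. Under this lens, both parts of the proposition will follow from the classical existence and uniqueness of saturated models in cardinality $\kappa$, applied in turn in $L$ and in $L^+$.

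For the existence half, I would first invoke the standard construction: since $\kappa$ is inaccessible and $|L^+|=|L|<\kappa$, the structure $\<\U,\D\>$ admits a saturated $L^+$-elementary extension $\<\U^*,\D^*\>$ of cardinality $\kappa$. Taking $L$-reducts, $\U^*$ is a saturated $L$-model of $T$ of cardinality $\kappa$ and an $L$-elementary extension of $\U$. By the uniqueness of saturated $L$-models of cardinality $\kappa$ over a small parameter set, there is an $L$-isomorphism $g\colon\U^*\to\U$ fixing $A$ pointwise. Setting $\C=g[\D^*]$ turns $g$ into an $L^+$-isomorphism $\<\U^*,\D^*\>\to\<\U,\C\>$, so $\<\U,\C\>$ is saturated (hence $\C$ is saturated in the sense of the preceding definition) and $\<\U,\C\>\equiv_A\<\U^*,\D^*\>\equiv_A\<\U,\D\>$, i.e.\ $\C\equiv_A\D$.

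For the moreover part, assume both $\D$ and $\C$ are saturated and $\C\equiv_A\D$. Then $\<\U,\D\>$ and $\<\U,\C\>$ are two saturated $L^+$-structures of cardinality $\kappa$ that are $L^+(A)$-elementarily equivalent, so the uniqueness of saturated models applied now in $L^+$ yields an $L^+$-isomorphism $f\colon\<\U,\D\>\to\<\U,\C\>$ fixing $A$. Since $f$ preserves the $L$-structure and has $\U$ as both domain and codomain, $f\in\Aut(\U/A)$; since it also preserves $P$, we have $f[\D]=\C$.

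The single nontrivial step is producing the saturated $L^+$-elementary extension of cardinality exactly $\kappa$; this is precisely what the inaccessibility of $\kappa$ together with $|L|<\kappa$ are there to ensure. Everything else is routine bookkeeping with uniqueness of saturated models, and I do not expect any real obstacle.
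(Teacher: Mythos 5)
Your argument is correct and follows essentially the same route as the paper: obtain a saturated model of $\Th(\<\U,\D\>)$ over $A$ of cardinality $\kappa$ (using inaccessibility), transport it onto $\U$ via an $L$-isomorphism fixing $A$ given by uniqueness of saturated models, and handle the second claim by back-and-forth (which is exactly what the uniqueness of saturated $L^+$-structures you invoke amounts to). The only difference is cosmetic — you realize the saturated copy as an elementary extension rather than an abstract elementarily equivalent model — so there is nothing to change.
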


\begin{proof} 
We prove that there is $\C\equiv_A\D$ such that expansion $\<\U,\C\>$ is saturated.
As $\kappa$ is a large inaccessible cardinal, there is a model $\<\U',\D'\>\equiv_A\<\U,\D\>$ that is saturated and of cardinality $\kappa$. Then there is an isomorphism $f:\U'\to\U$ that fixes $A$. Then $f[\D']=\C$ is the required saturated subset of $\U$. The second claim is clear by back-and-forth.
\end{proof}

Let $\Delta$ be a set of formulas and let $\<I,<_I\>$ be a linearly ordered set. We say that the sequence $\<a_i : i\in I\>$ is \emph{indiscernible in $\Delta$} if for every integer $k$ and two increasing tuples $i_1 <_I \cdots <_I i_k$ and $j_1 <_I \cdots <_I j_k$ and formula $\phi(x_1,\ldots,x_k)\in \Delta$, we have $\phi(a_{i_1},\dots,a_{i_k})\iff \phi(a_{j_1},\dots,a_{j_k})$. When $\Delta=L(A)$ we say that $\<a_i : i\in I\>$ is \emph{$A\mbox{-}$indiscernible}.

We denote by \emph{$o(\D/A)$\/} the set $\big\{f[\D]:f\in\Aut(\U/A)\big\}$, that is, the orbit of $\D$ under $\Aut(\U/A)$. If $o(\D/A)=\big\{\D\big\}$ we say that \emph{$\D$ is invariant over $A$}. A global type $p\in S_x(\U)$ is invariant over $A$ if for every $\phi(x,z)$ the set $\D_{p,\phi}$ is invariant over $A$. The main fact to keep in mind about global $A\mbox{-}$invariant types is that any sequence $\<a_i\,:\,i<\lambda\>$ such that $a_i\models p_{\restriction A,a_{\restriction i}}$ is an $A\mbox{-}$indiscernible sequence.

We assume that the reader is familiar with basic facts concerning \textsc{nip} theories as presented, e.g., in \cite[Chapter 2]{Sim}.

\section{Approximations}\label{approximations}

The set $\D\cap A^{|z|}$ is called the \emph{trace\/} of $\D$ over $A$. For every formula $\psi(z)\in L(\U)$ we define \emph{$\psi(A)$\/} $=$ $\psi(\U)\cap A^{|z|}$, that is, the trace over $A$ of the definable set $\psi(\U)=\big\{a\in\U^{|z|}:\psi(a)\big\}$. 

A set $\D$ is called \emph{externally definable\/} if there are a global type $p\in S_x(\U)$ and a formula $\phi(x,z)$ such that $\D=\{a\;:\;\phi(x,a)\in p\}$. Equivalently, a set $\D$ is externally definable if it is the trace over $\U$ of a set which is definable in some elementary extension of $\U$. This explains the terminology.

We prefer to deal with external definability in a different, though equivalent, way. 

\begin{definition}\label{def_epprox}
We say that $\D$ is \emph{approximable\/} by the formula $\phi(x,z)$ if for every finite $B$ there is a $b\in\U^{|x|}$ such that $\phi(b,B)=\D\cap B^{|z|}$. We may call the formula $\phi(x,z)$ the \emph{sort} of $\D$. If in addition we have that $\phi(b,\U)\subseteq\D$, we say that  $\D$ is \emph{approximable from below}. If  $\D\subseteq\phi(b,\U)$ we say that  $\D$ is \emph{approximable from above}.\QED
\end{definition} 

Approximability from below is an adaptation to our context of the notion of \textit{having an honest definition} in \cite{CS}.  The following proposition is clear by compactness.

\begin{proposition}\label{lem_approx=external}
For every $\D$ the following are equivalent:
\begin{itemize}
\item[1.] $\D$ is approximable;
\item[2.] $\D$ is externally definable.\QED 
\end{itemize}
\end{proposition}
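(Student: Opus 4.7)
The plan is to prove both implications by straightforward compactness arguments. Neither direction presents a real obstacle; the statement is essentially unpacking the definitions.

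For $(1)\Rightarrow(2)$, suppose $\D$ is approximable by $\phi(x,z)$. I would consider the partial type
\[\Sigma(x)\ =\ \{\phi(x,a)\,:\,a\in\D\}\ \cup\ \{\neg\phi(x,a)\,:\,a\in\U^{|z|}\sm\D\}.\]
A finite subset of $\Sigma(x)$ only mentions finitely many parameter tuples; let $B\subseteq\U$ be the (finite) set of coordinates appearing in them. By approximability, pick $b\in\U^{|x|}$ with $\phi(b,B)=\D\cap B^{|z|}$; then $b$ satisfies the chosen finite subset. Hence $\Sigma(x)$ is finitely consistent and, extended to a complete global type $p\in S_x(\U)$, witnesses $\D=\D_{p,\phi}$ by construction (both inclusions follow because $\Sigma$ already decides $\phi(x,a)$ for every $a\in\U^{|z|}$).

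For $(2)\Rightarrow(1)$, suppose $\D=\D_{p,\phi}$ for some global $p\in S_x(\U)$ and $\phi(x,z)$. Given a finite $B\subseteq\U$, the formula
\[\psi_B(x)\ =\ \bigwedge_{a\in B^{|z|}\cap\D}\phi(x,a)\ \land\ \bigwedge_{a\in B^{|z|}\sm\D}\neg\phi(x,a)\]
is a finite conjunction of formulas each of which lies in $p$, hence $\psi_B(x)\in p$. In particular $\psi_B(x)$ is consistent with $T$ over $\U$, and by saturation of $\U$ it is realized by some $b\in\U^{|x|}$. For this $b$ one has $\phi(b,B)=\D\cap B^{|z|}$, establishing approximability.

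Both directions rely only on the saturation of $\U$ and the completeness/consistency of the global type, so no subtlety enters beyond correctly identifying which finite conjunction witnesses what. I would present the argument in this order because the $(1)\Rightarrow(2)$ direction is the one that actually uses the hypothesis nontrivially (via the witness $b$ provided by approximability), while $(2)\Rightarrow(1)$ is a direct consequence of the fact that finitely many formulas from a consistent type are jointly realized.
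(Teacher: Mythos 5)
Your argument is correct and is exactly the compactness argument the paper has in mind: the paper offers no written proof beyond the remark that the proposition ``is clear by compactness,'' and your two directions (finite satisfiability of $\Sigma(x)=\{\phi(x,a):a\in\D\}\cup\{\neg\phi(x,a):a\notin\D\}$ via the approximating witnesses, and realizing the finite conjunction $\psi_B(x)\in p$ in $\U$) are the standard way to fill that in. Nothing further is needed.
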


\begin{example}
Let $T$ be the theory a dense linear orders without endpoints and let $\D\subseteq\U$ be an interval. Then $\D$ is approximable both from below and from above by the formula \mbox{$x_1<z<x_2$}.  Now let $T$ be the theory of the random graph. Then every $\D\subseteq\U$ is approximable and, when $\D$ has small cardinality, it is approximable from above but not from below.\QED
\end{example}

In Definition~\ref{def_epprox}, the sort $\phi(x,z)$ is fixed (otherwise any set would be approximable) but this requirement of uniformity may be dropped if the sets $B$ are allowed to be infinite.

\begin{proposition}\label{lem_approx_nonunif}
For every $\D$ the following are equivalent:
\begin{itemize}
\item[1.] $\D$ is approximable;
\item[2.] for every $B$ of cardinality $\le|T|$ there is $\psi(z)\in L(\U)$ such that $\psi(B)=\D\cap B^{|z|}$.
\end{itemize}
Similarly, the following are equivalent:
\begin{itemize}
\item[3.] $\D$ is approximable from below;
\item[4.]  for every $B\subseteq\D$ of cardinality $\le|T|$ there is $\psi(z)\in L(\U)$ such that $B^{|z|}\subseteq \psi(\U)\subseteq\D$.
\end{itemize}
\end{proposition}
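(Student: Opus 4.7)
The plan is to prove both equivalences in parallel: each forward direction realises a suitable partial type by $\kappa$-saturation, while each backward direction is a cardinality-compactness contrapositive exploiting $|L|\le|T|$.

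For (1)$\IMP$(2), fix the sort $\phi(x,z)$ of (1) and let $|B|\le|T|$. The partial $L(B)$-type
\[
p(x)\;=\;\{\phi(x,a):a\in\D\cap B^{|z|}\}\cup\{\neg\phi(x,a):a\in B^{|z|}\sm\D\}
\]
is finitely satisfiable by (1), and $|B|<\kappa$ lets $\kappa$-saturation of $\U$ realise $p$ by some $b^*$; then $\psi(z):=\phi(b^*,z)$ works. For (2)$\IMP$(1) suppose (1) fails and pick a finite $B_\phi\subseteq\U$ witnessing the failure for each $\phi(x,z)\in L$. Since $|L|\le|T|$, the set $B:=\bigcup_\phi B_\phi$ still has $|B|\le|T|$, and (2) supplies $\psi=\phi_0(b,z)$ with $\psi(B)=\D\cap B^{|z|}$; its restriction to $B_{\phi_0}\subseteq B$ contradicts the choice of $B_{\phi_0}$.

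For (3)$\IMP$(4) regard $B\subseteq\D$ as a set of $|z|$-tuples whose coordinate set $C\subseteq\U$ has size $\le|T|$. The right type is
\[
p(x)\;=\;\{\phi(x,a):a\in B\}\cup\{\A z\,(\phi(x,z)\imp z\in\D)\},
\]
an $L(C;\D)$-type of size $\le|T|$ whose finite satisfiability follows from (3) applied to the finite coordinate set of any finite fragment. To realise $p$ I would invoke Proposition~\ref{prop_e(D/A_o(D/A)} to replace $\D$ by a saturated $\C\equiv_C\D$: approximability from below by $\phi$ is expressed by an $L(\0;\D)$-scheme (one sentence per arity) so it transfers to $\C$, and $\C\equiv_C\D$ together with $B\subseteq\D\cap C^{|z|}$ gives $B\subseteq\C$. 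Realising the $\C$-analogue of $p$ in the saturated $\<\U,\C\>$ yields $b^*\in\U$ with $B\subseteq\phi(b^*,\U)\subseteq\C$. For (4)$\IMP$(3), if (3) fails pick finite $B_\phi\subseteq\U$ for each $\phi\in L$; the set $S:=\bigcup_\phi(\D\cap B_\phi^{|z|})\subseteq\D$ has $|S|\le|T|$, so (4) applied to $S$ produces $\psi=\phi_0(b,z)$ with $S\subseteq\psi(\U)\subseteq\D$, after which $\phi_0(b,\U)\subseteq\D$ together with $\D\cap B_{\phi_0}^{|z|}\subseteq S\subseteq\phi_0(b,\U)$ forces $\phi_0(b,B_{\phi_0})=\D\cap B_{\phi_0}^{|z|}$, contradicting the choice of $B_{\phi_0}$.

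The main obstacle I foresee is completing (3)$\IMP$(4): the witness produced above only satisfies $\phi(b^*,\U)\subseteq\C$, and since $b^*$ need not lie in $C$ the elementary equivalence $\C\equiv_C\D$ does not directly yield $\phi(b^*,\U)\subseteq\D$. I would handle this either by first establishing (4) for saturated $\D$ (where no transfer is required) and then arguing that a putative counterexample for arbitrary $\D$ would reflect into any saturated $\C\equiv_C\D$, contradicting (4) for $\C$; or by iterating Proposition~\ref{prop_e(D/A_o(D/A)} along a chain of parameter enlargements that absorbs the newly produced witnesses at each step.
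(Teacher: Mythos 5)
Your treatment of (1)$\IFF$(2) and of (4)$\IMP$(3) is correct and coincides with the paper's: the paper proves (2)$\IMP$(1) by exactly your union-of-finite-witnesses contrapositive, obtains (1)$\IMP$(2) ``by compactness'' (your type realized by saturation), and declares (3)$\IFF$(4) ``proved similarly'' --- which, for (4)$\IMP$(3), is your argument with $S=\bigcup_\phi(\D\cap B_\phi^{|z|})$.

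The obstacle you flag in (3)$\IMP$(4) is fatal, and neither of your proposed repairs can succeed, because that implication is false for general $\D$. Take $T$ the theory of dense linear orders, $M_0\preceq\U$ countable, and let $\D$ be the downward closure in $\U$ of the lower half $C_0$ of an irrational cut of $M_0\cong\mathbb{Q}$. Then $\D$ is downward closed, has the countable cofinal subset $B=C_0$, and $\U\sm\D$ has no least element: the convex set of points strictly between $C_0$ and $M_0\sm C_0$ is a nonempty initial segment of $\U\sm\D$ and, being the locus of a complete nonalgebraic type over $M_0$, has no minimum. This $\D$ is approximable from below by $z<x$: for finite $B'$, any $b\in\D$ above the finite set $\D\cap B'$ gives $\{z\in B':z<b\}=\D\cap B'$ and $\{z:z<b\}\subseteq\D$. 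Yet no $\psi(z)\in L(\U)$ satisfies $B\subseteq\psi(\U)\subseteq\D$: by quantifier elimination $\psi(\U)$ is a finite union of points and intervals with endpoints in $\U\cup\{\pm\infty\}$; one piece must contain a subset of $B$ cofinal in $\D$, so its right endpoint is an upper bound of $\D$ and hence lies in $\U\sm\D$ or is $+\infty$, and since $\U\sm\D$ has no minimum that piece then contains a point of $\U\sm\D$, contradicting $\psi(\U)\subseteq\D$. So (3) holds while (4) fails. The implication (3)$\IMP$(4) is true when $\<\U,\D\>$ is saturated --- there your type $\{\phi(x,a):a\in B\}\cup\{\A z\,(\phi(x,z)\imp z\in\D)\}$ is realized outright, which is why your first repair stalls exactly at the non-transferable clause $\psi(\U)\subseteq\C$ --- and this saturated case together with (4)$\IMP$(3) is all the paper ever uses (Theorem~\ref{thm_nip_qd} establishes (4) for a saturated $\C$ and invokes (4)$\IMP$(3)). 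As stated, however, the second equivalence of the proposition is incorrect, so the flaw you detected lies in the statement, not in your argument.
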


\begin{proof}
To prove \ssf{2}$\IMP$\ssf{1}, for a contradiction assume \ssf{2} and $\neg$\ssf{1}. For each formula $\psi(x,z)\in L$ choose a finite set $B$ such that $\psi(b,B)\neq\D\cap B^{|z|}$ for every $b\in\U^{|x|}$. Let $C$ be the union of all these finite sets. Clearly $|C|\le|T|$.  By \ssf{2} there are a formula $\phi(x,z)$ and a tuple $c$ such that $\phi(c,C)=\D\cap C^{|z|}$, contradicting the definition of $C$.

The implication \ssf{1}$\IMP$\ssf{2} is obtained by compactness and the equivalence \ssf{3}$\IFF$\ssf{4} is proved similarly. 
\end{proof}

\begin{proposition}\label{prop_approx_el_eq}
If $\D$ is approximable of sort $\phi(x,z)$ then so is any $\C$ such that $\C\equiv\D$. The same holds for approximability from below and from above.
\end{proposition}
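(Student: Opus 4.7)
The plan is to observe that, for a fixed formula $\phi(x,z)\in L$, the property ``$\X$ is approximable of sort $\phi(x,z)$'' is expressible by a countable set of sentences in $L(;\X)$ without parameters. The conclusion then falls out of the very definition of $\equiv$.

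For each integer $n\ge 1$ I would introduce the sentence
\[
\theta_n(\X)\ :=\ \A z_1\cdots\A z_n\ \E x\bigwedge_{i=1}^{n}\big(\phi(x,z_i)\iff z_i\in\X\big),
\]
where each $z_i$ ranges over $\U^{|z|}$. I would then verify that $\D$ is approximable of sort $\phi(x,z)$ if and only if $\theta_n(\D)$ holds for every $n$. For one direction, given a finite $B\subseteq\U$ the set $B^{|z|}$ is finite; enumerate it as $a_1,\dots,a_n$ and apply $\theta_n(\D)$ to obtain a witness $b$ with $\phi(b,B)=\D\cap B^{|z|}$. Conversely, given tuples $z_1,\dots,z_n\in\U^{|z|}$, let $B$ be the finite set of their coordinates and invoke approximability on $B$.

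For approximability from below (respectively from above) I would strengthen $\theta_n$ to
\[
\theta_n^-(\X)\ :=\ \A z_1\cdots\A z_n\ \E x\Big[\bigwedge_{i=1}^{n}\big(\phi(x,z_i)\iff z_i\in\X\big)\,\wedge\,\A z\big(\phi(x,z)\imp z\in\X\big)\Big],
\]
and symmetrically $\theta_n^+(\X)$ with $\A z\,(z\in\X\imp\phi(x,z))$ in place of the last conjunct. The same bookkeeping shows that approximability from below (resp.\ from above) of sort $\phi(x,z)$ is equivalent to the validity at $\X=\D$ of every $\theta_n^-$ (resp.\ every $\theta_n^+$).

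Each of $\theta_n$, $\theta_n^-$, $\theta_n^+$ is a parameter-free sentence of $L(;\X)$. Since by the definition in Section~1 the relation $\C\equiv\D$ asserts that $\C$ and $\D$ satisfy exactly the same sentences of $L(;\X)$, the three forms of approximability (with the fixed sort $\phi$) transfer from $\D$ to $\C$, and the proposition follows. I do not anticipate any real obstacle: the one point requiring care is the translation between ``finite $B\subseteq\U$'' in Definition~\ref{def_epprox} and the finite universal blocks $\A z_1\cdots\A z_n$ in the sentences above, and this is a routine reshuffling.
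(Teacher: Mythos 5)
Your proof is correct and follows essentially the same route as the paper: both express approximability of sort $\phi(x,z)$ (and its from-below/from-above variants) by the parameter-free sentences $\A z_1\cdots\A z_n\,\E x\bigwedge_{i=1}^n[\phi(x,z_i)\iff z_i\in\X]$, with the extra conjunct $\A z[\phi(x,z)\imp z\in\X]$ (resp.\ its converse), and then invoke the definition of $\equiv$. Your additional bookkeeping on the passage between finite sets $B$ and finite blocks of universally quantified tuples is exactly the routine verification the paper leaves implicit.
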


\begin{proof}
If the set $\D$ is approximable by $\phi(x,z)$ then for every $n$

\hfil$\displaystyle\A z_1,\dots,z_n\;\E x\ \bigwedge^n_{i=1}\big[\phi(x,z_i)\ \iff\ z_i\in\D\big]$. 

So the same holds for any $\C\equiv\D$. As for approximability from below, add the conjunct $\A z\,\big[\phi(x,z)\imp z\in\D\big]$ to the formula above, and similarly for approximability from above.
\end{proof}

\section{The Vapnik-Chervonenkis dimension}

We say that $u\subseteq\P(\U^{|z|})$ \emph{shatters\/} $B\subseteq\U^{|z|}$ if every $H\subseteq B$ is the trace over $B$ of some set $\D\in u$. The \emph{\vc-dimension\/} of $u$ is finite if there is some $n<\omega$ such that no set of size $n$ is shattered by $u$.

\begin{proposition}\label{lkjfhd}
The following are equivalent:
\begin{itemize}
\item[1.]  $e(\D/A)$ has finite \vc-dimension;
\item[2.]  $o(\C/A)$ has finite \vc-dimension for some (any) saturated $\C\equiv_A\D$.
\end{itemize}
\end{proposition}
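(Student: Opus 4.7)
The plan is to prove that $\vc\,e(\D/A)$ and $\vc\,o(\C/A)$ are in fact equal whenever $\C\equiv_A\D$ is saturated, from which the equivalence \ssf{1}$\IFF$\ssf{2} is immediate. Independence of the particular saturated representative is then free: if $\C_1,\C_2\equiv_A\D$ are both saturated, the second clause of Proposition~\ref{prop_e(D/A_o(D/A)} places them in the same $\Aut(\U/A)$-orbit, so $o(\C_1/A)=o(\C_2/A)$.

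The direction \ssf{1}$\IMP$\ssf{2} is the easy inclusion: since $f[\C]\equiv_A\C$ for every $f\in\Aut(\U/A)$, one always has $o(\C/A)\subseteq e(\C/A)=e(\D/A)$ when $\C\equiv_A\D$, and so any set shattered by $o(\C/A)$ is shattered by $e(\D/A)$. Finiteness of $\vc\,e(\D/A)$ thus transfers to $\vc\,o(\C/A)$.

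For the converse \ssf{2}$\IMP$\ssf{1} I would argue contrapositively. Suppose a finite $B\subseteq\U^{|z|}$ is shattered by $e(\D/A)$; the goal is to realise the same shattering inside $o(\C/A)$. For each $H\subseteq B$ pick $\D_H\in e(\D/A)$ with $\D_H\cap B=H$, and then apply Proposition~\ref{prop_e(D/A_o(D/A)} with the (still small) parameter set $A\cup B$ in place of $A$ to obtain a saturated $\C_H\equiv_{A\cup B}\D_H$. The key observation is that $X\cap B=H$ is a finite Boolean combination of atoms of the form $b\in X$ with $b\in B$, hence a sentence of $L(B;X)$; the equivalence $\equiv_{A\cup B}$ therefore forces $\C_H\cap B=H$. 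At the same time $\C_H\equiv_A\D_H\equiv_A\D\equiv_A\C$, and both $\C_H$ and $\C$ are saturated, so the conjugacy clause of Proposition~\ref{prop_e(D/A_o(D/A)} furnishes $f\in\Aut(\U/A)$ with $f[\C]=\C_H$; in particular $\C_H\in o(\C/A)$. Letting $H$ range over $\P(B)$ exhibits $B$ as shattered by $o(\C/A)$. The one delicate point is ensuring that the trace of $\D_H$ on $B$ survives the passage to the saturated avatar $\C_H$; this is precisely why the proposition is applied over $A\cup B$ rather than $A$, and everything else is unpacking definitions.
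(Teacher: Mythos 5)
Your proof is correct and follows essentially the same route as the paper: the inclusion $o(\C/A)\subseteq e(\D/A)$ for one direction, and for the converse replacing each shattering witness by a saturated one lying in $o(\C/A)$ via Proposition~\ref{prop_e(D/A_o(D/A)}. The one refinement you add --- invoking that proposition over $A\cup B$ so that the trace on $B$ is preserved --- is a point the paper's proof leaves implicit, and making it explicit is a genuine (if small) improvement in rigour rather than a different approach.
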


\begin{proof}
\ssf{1}$\IMP$\ssf{2}. Clear because $o(\C/A)\subseteq e(\D/A)$.

\ssf{2}$\IMP$\ssf{1}. Let $\C$ be any saturated set such that $\C\equiv_A\D$. Let $B$ be a finite set that is shattered by $e(\D/A)$, namely such that every $H\subseteq B$ is the trace of some $\C_H\equiv_A\D$. By Proposition~\ref{prop_e(D/A_o(D/A)}, we can require that all these sets $\C_H$ are saturated. Then they all belong to $o(\C/A)$. It follows that if $e(\D/A)$ has infinite \vc-dimension so does $o(\C/A)$.
\end{proof}

We say that a sequence of sentences $\<\phi_i:i<\omega\>$ \emph{converges\/} if the truth value of $\phi_i$ is eventually constant.

\begin{lemma}\label{lem_nip->indiscernible_converge}
Assume that $o(\D/A)$ has finite \vc-dimension and let $\<a_i:i<\omega\>$ be any $A\mbox{-}$indi\-scernible sequence. Then $\<a_i\in\D:i<\omega\>$ converges.
\end{lemma}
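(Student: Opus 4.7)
The plan is to run the classical \textsc{nip} alternation argument, with ``finite \vc-dimension of $o(\D/A)$'' playing the role usually played by the formula being \nip. I would argue by contradiction: assume the truth values of $a_i\in\D$ do not stabilize, and build arbitrarily large shattered sets using automorphic images of $\D$.

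First, if $\<a_i\in\D:i<\omega\>$ does not converge, both $\{i:a_i\in\D\}$ and $\{i:a_i\notin\D\}$ are infinite, so I can extract a subsequence $\<b_k:k<\omega\>$ of $\<a_i\>$ such that $b_k\in\D$ iff $k$ is even. Since any subsequence of an $A\mbox{-}$indiscernible sequence is again $A\mbox{-}$indiscernible, $\<b_k\>$ remains $A\mbox{-}$indiscernible.

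Next, fix $n<\omega$ and an arbitrary $H\subseteq\{0,\dots,n-1\}$. I choose strictly increasing indices $k_0<k_1<\cdots<k_{n-1}$ with the parity constraint that $k_i$ is even iff $i\in H$; this is possible because the evens and odds are both cofinal in $\omega$. By $A\mbox{-}$indiscernibility of $\<b_k\>$ the tuples $(b_0,\dots,b_{n-1})$ and $(b_{k_0},\dots,b_{k_{n-1}})$ have the same type over $A$, so there is $g\in\Aut(\U/A)$ with $g(b_i)=b_{k_i}$ for every $i<n$. Setting $\D_H=g^{-1}[\D]$ one has $\D_H\in o(\D/A)$ and
\[
b_i\in\D_H\ \iff\ g(b_i)\in\D\ \iff\ b_{k_i}\in\D\ \iff\ k_i\text{ is even}\ \iff\ i\in H.
\]
Thus $\{b_0,\dots,b_{n-1}\}$ is shattered by $o(\D/A)$, and since $n$ was arbitrary this contradicts finite \vc-dimension.

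I do not expect a real obstacle: the only small point to check is that extracting the alternating subsequence preserves $A\mbox{-}$indiscernibility (immediate) and that the parities of the $k_i$ can be chosen in increasing order to match any prescribed pattern $H$ (trivial). Everything else is just unwinding the definition of $o(\D/A)$ and applying one automorphism per subset $H$.
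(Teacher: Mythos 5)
Your proof is correct and follows essentially the same route as the paper: negate convergence, use indiscernibility to find an automorphism over $A$ carrying a tuple with the prescribed membership pattern onto an initial segment, and conclude that the automorphic images of $\D$ shatter arbitrarily large finite sets. The extraction of an alternating subsequence is a harmless extra step; the paper simply picks the indices $i_0<\dots<i_{n-1}$ with the desired pattern directly from the original sequence.
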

\begin{proof}
Negate the conclusion and let $\<a_i:i\in\omega\>$ witness this. We show that $o(\D/A)$ shatters $\{a_i:i<n\}$ for arbitrary $n$, hence that $o(\D/A)$ has infinite \vc-dimension. Fix some $H\subseteq n$, and for every $h<n$ pick some $a_{i_h}$ such that $a_{i_h}\in\D$ if and only if $h\in H$. We also require that $i_0<\dots< i_{n-1}$. Let $f\in\Aut(\U/A)$ be such that $f:a_{i_0},\dots a_{i_{n-1}}\mapsto a_0,\dots a_{n-1}$. Then $a_h\in f[\D]$ if and only if $h\in H$.
\end{proof}

We abbreviate $\U\sm\C$ as \emph{$\neg\C$}. We write \emph{$\neg^i$} for $\neg\dots(i$ times$)\dots\neg$ and abbreviate  $\neg^i(\cdot\in\cdot)$ as \emph{$\notin^i$}. The following lemmas adapt some ideas from \cite[Section~1]{CS} to our context.

\begin{lemma}\label{lem_tipi_inv}
Assume that $\C$ is saturated and that $o(\C/A)$ has finite \vc-dimension. Let \mbox{$M\preceq\U$} be an $L(A;\C)\mbox{-}$saturated. Then every global $A\mbox{-}$invariant type $p(z)$ contains a formula $\psi(z)\in L(M)$ such that either $\psi(\U)\subseteq\C$ or $\psi(\U)\subseteq\neg\C$.
\end{lemma}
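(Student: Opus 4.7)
The plan is to argue by contradiction, with Lemma~\ref{lem_nip->indiscernible_converge} supplying the contradiction. Assume that $p$ contains no formula $\psi(z)\in L(M)$ with $\psi(\U)\subseteq\C$ or $\psi(\U)\subseteq\neg\C$; equivalently, every $\psi(z)\in p\cap L(M)$ meets both $\C$ and $\neg\C$. I would assume $A\subseteq M$ without loss of generality, since otherwise the statement is strengthened by replacing $M$ by a suitable extension.

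Next I would build by induction on $i<\omega$ a sequence $\<a_i:i<\omega\>$ of elements of $M$ realizing the Morley sequence of $p$ over $A$, but with a prescribed alternating $\C$-pattern: $a_i\in\C$ for $i$ even and $a_i\notin\C$ for $i$ odd. At stage $i$, the partial type to realize is $p_{\restriction Aa_{<i}}(z)$ augmented with the literal $z\in\C$ or $z\notin\C$, according to the parity of $i$. The key point is finite consistency: any conjunction of finitely many formulas from $p_{\restriction Aa_{<i}}$ is itself a formula in $p\cap L(M)$, because $Aa_{<i}\subseteq M$, and by the contradiction hypothesis every such formula meets both $\C$ and $\neg\C$. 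Invoking the $L(A;\C)$-saturation of $M$ — read in the standard sense, as saturation of the expansion $\<M,\C\cap M^{|z|}\>$ over parameters from $M$ — then provides the required $a_i\in M$.

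Because every $a_i$ realizes $p_{\restriction Aa_{<i}}$ and $p$ is $A$-invariant, the sequence $\<a_i\>$ is $A$-indiscernible in $L(A)$. By construction, however, $\<a_i\in\C:i<\omega\>$ alternates and so does not converge, contradicting Lemma~\ref{lem_nip->indiscernible_converge} applied to $\D=\C$. The main subtlety, and the likely obstacle, is extracting enough mileage from the hypothesis that $M$ is $L(A;\C)$-saturated: the type realized at stage $i$ carries parameters from $Aa_{<i}\subseteq M$, not just from $A$, so the argument relies on interpreting the saturation assumption as genuine saturation of the expansion of $M$ rather than only as realizing types in $L(A;\C)$. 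If that reading is not automatic from the definition, one repairs the proof by replacing $M$ with a sufficiently saturated elementary extension inside $\U$, which does not affect the conclusion since we only need some witnessing $\psi\in L(M)$.
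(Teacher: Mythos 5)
Your overall strategy is the paper's: produce a Morley-type sequence of $p$ inside $M$ whose $\C$-membership alternates and play it against Lemma~\ref{lem_nip->indiscernible_converge}. The paper organizes this around the \emph{maximal finite length} of such a sequence (the failure to extend past length $n$ yields $p(z)|_{A,b_{\restriction n}}\imp z\notin^{n+1}\C$, and saturation compresses this restriction to a single $\psi\in L(M)$), whereas you take the contrapositive and try to build an infinite alternating sequence. These are the same argument differently packaged, and your finite-consistency check (a finite conjunction from $p_{\restriction Aa_{<i}}$ lies in $p\cap L(M)$, hence meets both $\C$ and $\neg\C$ under the negated conclusion) is exactly the right computation.

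The gap is the one you flag yourself, and your proposed repair does not close it. The stated definition of $L(A;\C)$-saturation only promises realizations in $M$ of types over $A$, while your stage-$i$ type lives over $Aa_{<i}$. Replacing $M$ by a more saturated $M'\succeq M$ is not a legitimate fix: it proves that $p$ contains some $\psi\in L(M')$, which is strictly weaker than the stated conclusion $\psi\in L(M)$, and the location of the parameters matters downstream (in Lemma~\ref{corol_approx_C} the covering formulas must be over the same model $M$ over which the types $\tp(a/M)$ are extended to coheirs; a $\psi$ with parameters outside $M$ need not be satisfied by $a$). The correct repair --- and what the paper's ``we can assume further that $b_i\in M$'' is implicitly doing --- is to use the $A$-invariance of $p$: whether $b_i\models p_{\restriction A,b_{\restriction i}}$ depends only on $\tp(b_0,\dots,b_i/A)$, so the whole finite configuration ``$b_i\models p_{\restriction A,b_{\restriction i}}$ and $b_i\notin^i\C$ for $i<n$'' is captured by a single finitely consistent type over $A$ in the finitely many variables $z_0,\dots,z_{n-1}$ inside $L(A;\C)$, which the hypothesis does realize in $M$. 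This trick is inherently finitary, which is why the paper's maximal-finite-sequence formulation is the cleaner one; your infinite construction would need an additional compactness step (using that $\<\U,\C\>$ is saturated) to assemble the infinite alternating sequence from its finite approximations in $M$. With that reorganization your proof becomes the paper's.
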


\begin{proof}
By lemma~\ref{lem_nip->indiscernible_converge} there is no  infinite sequence $\<b_i:i<\omega\>$ such that

\ceq{\ssf{1.}\hfill b_i}{\models}{p(z)|_{A,b_{\restriction i}}\ \ \wedge\ \ z\notin^i\C.}

Let $n$ be the maximal length of a sequence $\<b_i:i<n\>$ that satisfies \ssf{1}. Then 

\ceq{\hfill}{}{ p(z)|_{A,b_{\restriction n}}\,\imp\, z\notin^n\C.}

As $M$ is  $L(A;\C)\mbox{-}$saturated, we can assume further that $b_i\in M$. Also, by saturation we can replace $p(z)|_{A,b_{\restriction n}}$ with some formula $\psi(z)$. Then, if $n$ is even, $\psi(\U)\subseteq\C$, and if $n$ is odd $\psi(\U)\subseteq\neg\C$.
\end{proof}

Notice that $p(z)\in S(M)$ is finitely satisfied in $A\subseteq M$ if and only if it contains the type

\ceq{\#\hfill q(z)}{=}{\big\{\neg\phi(z)\in L(M)\ :\ \phi(A)=\0\big\}.}

With this notation in mind, we can state the following lemma.

\begin{lemma}\label{corol_approx_C}
Assume $\C$ is saturated and $o(\C/A)$ has finite \vc-dimension. Then there are two formulas $\psi_i(z)$, where $i<2$, such that $\psi_i(z)\imp z\notin^i\C$ and, if $q(z)$ is the type defined above, $q(z)\imp\psi_0(z)\vee\psi_1(z)$.
\end{lemma}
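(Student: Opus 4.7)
The plan is to derive this lemma from Lemma~\ref{lem_tipi_inv} by a compactness argument. Let $M\preceq\U$ be the $L(A;\C)\mbox{-}$saturated structure implicit in the definition of $q(z)$, and for each $i<2$ set
\[
S_i\ :=\ \bigl\{\psi(z)\in L(M)\ :\ \psi(\U)\subseteq\neg^{i}\C\bigr\}.
\]
The conclusion asks for $\psi_i\in S_i$ such that the partial type $q(z)\cup\{\neg\psi_0(z),\neg\psi_1(z)\}$ has no realization in $\U$. Since each $S_i$ is closed under finite disjunctions and $q(z)$ is closed under finite conjunctions, it suffices to show that the larger partial type
\[
\Sigma(z)\ :=\ q(z)\,\cup\,\bigl\{\neg\psi(z)\ :\ \psi\in S_0\cup S_1\bigr\}
\]
is not finitely satisfied in $\U$: disjoining the $\neg\psi$'s that appear in an inconsistent finite subset of $\Sigma$ then yields the required $\psi_0$ and $\psi_1$.

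Suppose, for contradiction, that $\Sigma$ is finitely satisfied in $\U$. Extend $\Sigma$ to a complete type $r\in S(M)$. Since $r\supseteq q$, the type $r$ is finitely satisfied in $A$, and so it extends to a global type $p\in S(\U)$ that is finitely satisfied in $A$, hence $A\mbox{-}$invariant. Applying Lemma~\ref{lem_tipi_inv} to $p$ produces some $\psi(z)\in L(M)\cap p$ with $\psi(\U)\subseteq\C$ or $\psi(\U)\subseteq\neg\C$; in other words, $\psi\in S_0\cup S_1$. But then $\neg\psi\in\Sigma\subseteq r\subseteq p$, contradicting $\psi\in p$.

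The substantive content is already packaged in Lemma~\ref{lem_tipi_inv}; the task here is merely to uniformize, by compactness, the witness it supplies over all completions of $q$. The only mildly delicate point is passing from a completion of $q$ over $M$ to an $A\mbox{-}$invariant global type, which is immediate from the standard fact that a type finitely satisfied in a small set admits a global extension with the same property.
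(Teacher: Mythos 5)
Your proof is correct and follows essentially the same route as the paper: both reduce to Lemma~\ref{lem_tipi_inv} by noting that any completion of $q$ over $M$ is finitely satisfiable in $A$, hence extends to a global $A$-invariant type, and then conclude by compactness (your explicit inconsistency of $\Sigma$ is just the contrapositive of the paper's ``$q(\U)$ is covered by such formulas''). No gaps.
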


\begin{proof}
Let $M$ be an $L(A;\C)\mbox{-}$saturated model. By definition, for every $a\models q(z)$ the type $\tp(a/M)$ is finitely satisfiable in $A$ so it extends to a global invariant type. By Lemma~\ref{lem_tipi_inv}, $q(\U)$ is covered by formulas $\psi(z)\in L(M)$ such that either $[\psi(z)\imp z\in\C]$ or $[\psi(z)\imp z\notin\C]$. The conclusion follows by compactness.
\end{proof}

\begin{theorem}\label{thm_nip_qd}
Assume $\C$ is saturated and $o(\C/A)$ has finite \vc-dimension for some $A$. Then $\C$ is approximable from below and from above.
\end{theorem}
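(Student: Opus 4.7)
The plan is to apply Lemma~\ref{corol_approx_C} with an enlarged parameter set and then unpack the resulting dichotomy into a non-uniform approximation, which Proposition~\ref{lem_approx_nonunif} converts into the uniform form. First, a symmetry remark reduces the "from above" half to the "from below" half: since $\<\U,\neg\C\>$ is the same structure as $\<\U,\C\>$ up to reading the predicate negatively, and since \vc-dimension is complementation-invariant, $\neg\C$ satisfies the same hypotheses as $\C$; and $\C$ is approximable from above iff $\neg\C$ is approximable from below. So I may concentrate on approximability from below.

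Fix $B\subseteq\U$ with $|B|\le|T|$. Because $\Aut(\U/A\cup B)\subseteq\Aut(\U/A)$ we have $o(\C/A\cup B)\subseteq o(\C/A)$, so the hypothesis of Lemma~\ref{corol_approx_C} persists after $A$ is replaced by $A\cup B$. The lemma then yields formulas $\psi_0,\psi_1\in L(\U)$ with $\psi_i(\U)\subseteq\neg^i\C$ and $q(z)\imp\psi_0(z)\vee\psi_1(z)$, where $q(z)=\{\neg\phi(z)\in L(M):\phi(A\cup B)=\0\}$ and $M$ is a suitable $L(A\cup B;\C)$-saturated model.

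The key observation is that every tuple $b\in B^{|z|}$ realizes $q$: the type $\tp(b/M)$ is finitely satisfied in $A\cup B$, witnessed by $b$ itself. Hence $b\models\psi_0\vee\psi_1$. Since $\psi_0(\U)\subseteq\C$ and $\psi_1(\U)\subseteq\neg\C$ are disjoint, the tuple $b$ lies in $\psi_0(\U)$ iff $b\in\C$. Therefore $\psi_0(\U)\cap B^{|z|}=\C\cap B^{|z|}$ and $\psi_0(\U)\subseteq\C$; this is exactly the non-uniform form of approximability from below given by Proposition~\ref{lem_approx_nonunif}, and the compactness argument internal to that proposition upgrades it to a uniform sort $\phi(x,z)$.

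I do not foresee any real obstacle. The only mildly delicate step is the decision to enlarge $A$ to $A\cup B$ before invoking Lemma~\ref{corol_approx_C}: this is what forces every tuple in $B^{|z|}$ to fall inside the domain of the dichotomy, via the triviality that a tuple's type is always finitely satisfiable in any set that contains it.
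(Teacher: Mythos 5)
Your proof is correct and follows essentially the same route as the paper's: enlarge $A$ to absorb $B$, invoke Lemma~\ref{corol_approx_C}, observe that tuples from the enlarged parameter set realize $q$ and hence fall under the dichotomy $\psi_0\vee\psi_1$, conclude via Proposition~\ref{lem_approx_nonunif}, and reduce approximability from above to that of $\neg\C$. The only cosmetic difference is that you establish the trace equality $\psi_0(\U)\cap B^{|z|}=\C\cap B^{|z|}$ for arbitrary small $B$, whereas the paper takes $B\subseteq\C$ from the outset and reads off condition 4 of that proposition directly.
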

\begin{proof}
Let $B\subseteq\C$ be given. Enlarging $A$ if necessary, we can assume that $B\subseteq A$. Let $M$ and $q(z)\subseteq L(M)$ be as in $\#$ above. Trivially $A\subseteq q(\U)$, hence $B\subseteq\psi_0(\U)\subseteq\C$. The set $B$ has arbitrary (small) cardinality. Then by Lemma~\ref{lem_approx_nonunif}, $\C$ is approximable from below.

As for approximation from above, observe that this is equivalent to $\neg\C$ being approximable from below. As $\neg\C$ is also saturated and $o(\neg\C/A)$ has finite \vc-dimension, approximability from above follows.
\end{proof}

\begin{corollary}\label{corol_e(D/A)}
Assume $e(\D/A)$ has finite \vc-dimension for some $A$. Then $\D$ is approximable from below and from above.\QED
\end{corollary}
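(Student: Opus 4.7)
The plan is to chain together the preceding results in the paper: the corollary should follow almost immediately from Theorem~\ref{thm_nip_qd} once we pass from $\D$ to a saturated representative of its equivalence class.

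First I would invoke Proposition~\ref{prop_e(D/A_o(D/A)} to pick a saturated $\C$ with $\C\equiv_A\D$. Then by Proposition~\ref{lkjfhd}, the hypothesis that $e(\D/A)$ has finite \vc-dimension transfers to $o(\C/A)$ (which sits inside $e(\D/A)$, or equivalently, by the saturation argument, coincides with $e(\D/A)$ up to the saturated representatives). Hence the hypotheses of Theorem~\ref{thm_nip_qd} are met for $\C$, and we conclude that $\C$ is approximable from below and from above by some sort $\phi(x,z)$ (together with the appropriate implication $\A z[\phi(x,z)\imp z\in\C]$ or $\A z[z\in\C\imp\phi(x,z)]$).

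Finally, since $\C\equiv_A\D$ implies in particular $\C\equiv\D$, Proposition~\ref{prop_approx_el_eq} transfers approximability from below and from above back from $\C$ to $\D$, with the same sort $\phi(x,z)$. This completes the proof.

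There is no real obstacle; the work has already been done in Theorem~\ref{thm_nip_qd}. The only point worth stating carefully is the passage from $e(\D/A)$ to $o(\C/A)$, which is exactly the content of Proposition~\ref{lkjfhd}, and the observation that elementary equivalence (not requiring $A$-equivalence) suffices to apply Proposition~\ref{prop_approx_el_eq} in the final step.
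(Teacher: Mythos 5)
Your proof is correct and follows essentially the same route as the paper: pass to a saturated $\C\equiv_A\D$ via Proposition~\ref{prop_e(D/A_o(D/A)}, note that $o(\C/A)\subseteq e(\D/A)$ has finite \vc-dimension, apply Theorem~\ref{thm_nip_qd}, and transfer back to $\D$ via Proposition~\ref{prop_approx_el_eq}. No issues.
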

\begin{proof}
Let $\C\equiv_A\D$ be saturated. As $o(\C/A)$ also has finite \vc-dimension, from Theorem~\ref{thm_nip_qd} it follows that $\C$ is approximable from below and from above. Then by Proposition~\ref{prop_approx_el_eq} the same conclusion holds for $\D$.
\end{proof}

Recall that a formula $\phi(x,z)\in L$ is \textsc{nip} if $\big\{\phi(a,\U)\, :\, a\in\U^{|x|}\big\}$ has finite \vc-dimension. If this is the case, $\big\{\D_{p,\phi}\,:\,p\in S_x(\U)\big\}$, that is, the set of externally definable sets of sort $\phi(x,z)$, also has finite \vc-dimension. Now, observe that if $\D$ is any externally definable set and $\C\equiv\D$ then $\C$ is also externally definable and has the same sort as $\D$. Hence, if $\phi(x,z)$ is \textsc{nip}, $e(\D)\subseteq\big\{\D_{p,\phi}\,:\,p\in S_x(\U)\big\}$ has finite \vc-dimension. 

The theory $T$ is \textsc{nip} if in $\U$ every formula is \textsc{nip}. Hence we obtain the following characterization of externally definable sets in a \textsc{nip} theory:

\begin{corollary}\label{corol_e(D/A)_NIP}
Il $T$ is \textsc{nip} then the following are equivalent:
\begin{itemize}
\item[1.] $\D$ is approximable from below (in particular, externally definable);
\item[2.] $e(\D)$ has finite \vc-dimension.\QED
\end{itemize}
\end{corollary}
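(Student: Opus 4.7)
The plan is to verify the two directions separately; both are short given the machinery just assembled.

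For \ssf{2}$\IMP$\ssf{1}, the implication is immediate from Corollary~\ref{corol_e(D/A)} applied with $A=\varnothing$: if $e(\D)$ has finite \vc-dimension then so does $e(\D/\varnothing)=e(\D)$, and the corollary yields approximability from below (and above).

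For \ssf{1}$\IMP$\ssf{2}, I would argue as already hinted in the paragraph preceding the corollary. First, by Proposition~\ref{lem_approx=external}, approximability from below entails approximability, so $\D$ is externally definable with some sort $\phi(x,z)\in L$. Proposition~\ref{prop_approx_el_eq} then says that every $\C\equiv\D$ is also approximable with the \textit{same} sort $\phi(x,z)$; equivalently, every $\C\in e(\D)$ has the form $\C=\D_{p,\phi}$ for some global type $p\in S_x(\U)$. Therefore
\[
e(\D)\ \subseteq\ \big\{\D_{p,\phi}\,:\,p\in S_x(\U)\big\}.
\]

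The main (and only substantive) step is to observe that the family on the right has finite \vc-dimension whenever $\phi(x,z)$ is \textsc{nip}, which is guaranteed here by the hypothesis that $T$ is \textsc{nip}. This is a standard compactness/duality argument: a finite set $\{a_1,\dots,a_n\}$ is shattered by $\{\D_{p,\phi}\,:\,p\in S_x(\U)\}$ if and only if for every $H\subseteq\{1,\dots,n\}$ the partial type $\bigwedge_{i\in H}\phi(x,a_i)\wedge\bigwedge_{i\notin H}\neg\phi(x,a_i)$ is consistent, which is precisely the statement that $\{a_1,\dots,a_n\}$ is shattered by the dual family $\{\phi(\U,b):b\in\U^{|x|}\}$ of the partitioned formula. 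Since $\phi$ is \textsc{nip}, the dual also has finite \vc-dimension (NIP is symmetric in $x$ and $z$), bounding the \vc-dimension of $e(\D)$. The finite \vc-dimension of $e(\D)$ now follows, concluding the proof.
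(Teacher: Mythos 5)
Your proof is correct and takes essentially the same route as the paper: the direction \ssf{2}$\IMP$\ssf{1} is Corollary~\ref{corol_e(D/A)}, and \ssf{1}$\IMP$\ssf{2} is precisely the observation in the paragraph preceding the corollary, namely that $e(\D)\subseteq\big\{\D_{p,\phi}\,:\,p\in S_x(\U)\big\}$ for the sort $\phi(x,z)$ of $\D$, a family of finite \vc-dimension because $\phi$ is \textsc{nip}. One small quibble: the family witnessing your shattering condition is $\big\{\phi(b,\U)\,:\,b\in\U^{|x|}\big\}$, which is exactly the family in the paper's definition of a \textsc{nip} formula, so the detour through the dual family and the symmetry of \textsc{nip} is unnecessary (and your $\phi(\U,b)$ with $b\in\U^{|x|}$ is a typo for $\phi(b,\U)$).
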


We conclude by mentioning the following corollary, which is a version of Proposition~1.7 of \ssf{[CS]} stated with different terminology. Note that it is not necessary to require that $T$ is \textsc{nip}.

\begin{corollary}\label{corol_CS}
If $\D$ is approximable by a \textsc{nip}\ formula, then $\D$ is approximable from below.
\end{corollary}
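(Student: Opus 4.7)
The plan is to reduce the statement to Corollary~\ref{corol_e(D/A)} applied with $A=\0$, by verifying that $e(\D)$ has finite \vc-dimension. Once this is in hand, Corollary~\ref{corol_e(D/A)} immediately yields that $\D$ is approximable from below.

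The first step is to observe that $e(\D)$ is contained in the family of externally definable sets of sort $\phi$. Indeed, by Proposition~\ref{prop_approx_el_eq} every $\C\equiv\D$ is again approximable by $\phi(x,z)$, and Proposition~\ref{lem_approx=external} identifies each such $\C$ with some $\D_{p,\phi}$ for $p\in S_x(\U)$. Hence
\[
e(\D)\ \subseteq\ \big\{\D_{p,\phi}\,:\,p\in S_x(\U)\big\}.
\]

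The second step uses the fact recalled in the paragraph preceding Corollary~\ref{corol_e(D/A)_NIP}: if $\phi(x,z)$ is \nip, then $\{\D_{p,\phi}:p\in S_x(\U)\}$ has finite \vc-dimension. This is the standard duality between $\phi(x,z)$ and its dual formula $\phi(z,x)$: shattering $n$ externally definable sets $\D_{p_i,\phi}$ by a parameter set $\{a_1,\dots,a_n\}$ amounts to the dual family $\{\phi(a_i,\U):i\le n\}$ realizing all $2^n$ Boolean patterns, which is incompatible with \nip\ of $\phi$ once $n$ exceeds its \vc-dimension. Combining the two steps, $e(\D)$ has finite \vc-dimension, and Corollary~\ref{corol_e(D/A)} finishes the proof.

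There is no real obstacle here: the work has already been done in Corollary~\ref{corol_e(D/A)}, and the statement is essentially a bookkeeping exercise. It is worth noting, as the corollary explicitly emphasizes, that no \nip\ assumption on the ambient theory $T$ is used — only \nip\ of the single formula $\phi$ is needed to bound the \vc-dimension of the ambient family of externally definable sets of sort $\phi$.
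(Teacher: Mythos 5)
Your proposal is correct and follows essentially the same route as the paper: observe via Proposition~\ref{prop_approx_el_eq} that every set in $e(\D)$ is approximable of sort $\phi$, hence $e(\D)$ sits inside the externally definable sets of sort $\phi$, which has finite \vc-dimension when $\phi$ is \nip, and then invoke Corollary~\ref{corol_e(D/A)}. Your additional explanation of the duality argument merely unpacks the fact the paper already records in the paragraph preceding Corollary~\ref{corol_e(D/A)_NIP}.
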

\begin{proof}
If $\D$ is approximable of sort $\phi(x,z)$, by Proposition~\ref{prop_approx_el_eq}, so are all sets in $e(\D)$.  If  $\phi(x,z)$ is \textsc{nip}, then  $e(\D)$ has finite \vc-dimension and Corollary~\ref{corol_e(D/A)} applies.
\end{proof}

Observe that, given a formula $\phi(x,z)$ that approximates $\D$, the proof of Corollary~\ref{corol_CS} does not give explicitely the formula $\psi(x,z)$ that approximates $\D$ from below.

\section{Lascar invariance}
\def\equivL{\stackrel{\smash{\scalebox{.5}{\rm L}}}{\equiv}}

\def\Autf{{\rm Autf}}
The content of the second part of the paper is only loosely connected to the previous sections. We introduce the notion of a \textit{pseudo-invariant set\/} which is connected to non-dividing but it is sensible for arbitrary subsets of $\U$. We assume that the reader is familiar with basic facts concerning Lascar strong types and dividing (see e.g., \cite{Sim}, \cite{Cas}, \cite{TZ}) though in this section we will recall everything we need.

If $o(\D/A)=\big\{\D\big\}$ we say that $\D$ is \emph{invariant over $A$}. We say that $\D$ is \emph{invariant\/} tout court if it is invariant over some $A$. We say that $\D$ is \emph{Lascar invariant over $A$\/} if it is invariant over every model $M\supseteq A$.

\begin{proposition}\label{prop_numero_quasi_invarianti}
There are at most $2^{2^{|L(A)|}}$ sets $\D$ that are Lascar invariant over $A$.
\end{proposition}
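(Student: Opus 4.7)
The plan is to show that $\D$ is Lascar invariant over $A$ if and only if $\D$ is a union of Lascar strong types over $A$ on $|z|\mbox{-}$tuples, and then to invoke the standard cardinality bound on the number of such types.

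First I would establish the characterization. Let $\Autf(\U/A)$ denote the group generated by $\bigcup\big\{\Aut(\U/M)\ :\ M\supseteq A \text{ is a model}\big\}$; its orbits on $\U^{|z|}$ are by definition the Lascar strong types over $A$. If $\D$ is Lascar invariant over $A$, then $f[\D]=\D$ for every $f\in\Aut(\U/M)$ whenever $M\supseteq A$ is a model, and since these subgroups generate $\Autf(\U/A)$, the set $\D$ is fixed setwise by the whole group, hence is a union of its orbits. Conversely, any union of $\Autf(\U/A)\mbox{-}$orbits is fixed setwise by every $\Aut(\U/M)$ with $M\supseteq A$ a model, hence is Lascar invariant over $A$.

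Then I would just count. By the standard cardinality bound on Lascar strong types (see, e.g., \cite{Cas} or \cite{TZ}), the number of Lascar strong types over $A$ on tuples of any fixed finite arity is at most $2^{|L(A)|}$. A Lascar invariant set $\D\subseteq\U^{|z|}$ is determined by which of these equivalence classes it contains, so there are at most $2^{2^{|L(A)|}}$ such sets.

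The only nontrivial ingredient is the bound on the number of Lascar strong types; this is part of the standard background on Lascar strong types and dividing that the paper has already declared as assumed, so no obstacle is expected beyond correctly reducing the problem to counting unions of these classes.
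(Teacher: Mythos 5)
Your proof is correct, but it takes a somewhat different route from the paper's. You characterize the Lascar invariant sets exactly as the unions of Lascar strong types over $A$ and then invoke the standard bound $2^{|L(A)|}$ on the number of such types; both steps are sound (a set fixed setwise by each $\Aut(\U/M)$ for $M\supseteq A$ a model is fixed by the group these subgroups generate, and conversely). The paper short-circuits this: it fixes a single model $N\supseteq A$ with $|N|\le|L(A)|$, notes that Lascar invariance over $A$ implies invariance over $N$, and counts the $N$-invariant sets as unions of $\Aut(\U/N)$-orbits, i.e.\ of sets of realizations of complete types over $N$, of which there are at most $2^{|L(A)|}$ by saturation of $\U$. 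The paper's argument is thus more self-contained, needing only the existence of a small model containing $A$; indeed the standard bound on the number of Lascar strong types that you cite is itself usually proved by exactly this one-model observation, so your argument contains the paper's as a subroutine. What your version buys is the stronger conclusion of an exact description of the Lascar invariant sets (the paper uses only the one implication ``Lascar invariant over $A$ implies invariant over $N$''), which is more than is needed for the stated upper bound but is a genuinely informative reformulation.
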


\begin{proof}
Let $N$ be a model containing $A$ of cardinality $\le|L(A)|$. Every Lascar invariant set over $A$ is invariant over $N$. The proposition follows as $|N|\le|L(A)|$, and there are at most $2^{2^{|N|}}$ sets invariant over $N$.
\end{proof}

\begin{proposition}\label{prop_Lascar_indiscernibles}
For every $\D$ and every $A\subseteq M$ the following are equivalent:
\begin{itemize}
\item[1.] $\D$ is Lascar invariant over $A$;
\item[2.] every set in $o(\D/A)$ is $M\mbox{-}$invariant;
\item[3.] $o(\D/A)$ has cardinality $<\kappa$;
\item[4.] every endless $A\mbox{-}$indiscernible sequence is indiscernible in $L(A;\D)$;
\item[5.] $c_0\in\D\iff c_1\in\D$ for every $A\mbox{-}$indiscernible sequence $c=\<c_i:i<\omega\>$.
\end{itemize}
\end{proposition}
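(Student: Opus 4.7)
My plan is to prove the five conditions equivalent by the two cycles $\ssf{1}\Rightarrow\ssf{2}\Rightarrow\ssf{3}\Rightarrow\ssf{1}$ and $\ssf{1}\Rightarrow\ssf{4}\Rightarrow\ssf{5}\Rightarrow\ssf{1}$.

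Among the easy implications: for $\ssf{1}\Rightarrow\ssf{2}$, if $\C=f[\D]$ with $f\in\Aut(\U/A)$ and $g\in\Aut(\U/M)$ then $g[\C]=\C$ is equivalent to $(f^{-1}gf)[\D]=\D$, and $f^{-1}gf$ fixes the model $f^{-1}[M]\supseteq A$ pointwise, so Lascar invariance of $\D$ applies; for $\ssf{2}\Rightarrow\ssf{3}$, every $M$-invariant set is a union of complete types over $M$, hence there are at most $2^{2^{|M|}}<\kappa$ of them, using $|M|<\kappa$ and the inaccessibility of $\kappa$; and $\ssf{4}\Rightarrow\ssf{5}$ is immediate by specialising to the formula $x_1\in\X$ and to the endless $\omega$-sequence.

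The two bridges involving indiscernibles use Lascar strong types. For $\ssf{5}\Rightarrow\ssf{1}$: given a model $N\supseteq A$ and $f\in\Aut(\U/N)$, any $a$ and $f(a)$ share a type over $N$, hence (since $N$ is a model) the same Lascar strong type, so they are connected by a finite zigzag of $A$-indiscernible $\omega$-sequences; condition \ssf{5}, extended from positions $0,1$ to any two positions via sub-sequence extraction, keeps $\D$-membership constant along each leg and gives $a\in\D\iff f(a)\in\D$. For $\ssf{1}\Rightarrow\ssf{4}$: Lascar invariance of $\D$ transfers to every set $\big\{\bar b:\phi(\bar b,\D)\big\}$ with $\phi(\bar x,\X)\in L(A;\X)$, since $\Autf(\U/A)$ commutes with formula evaluation and fixes $\D$; given an endless $A$-indiscernible $\<c_i:i\in I\>$ and two increasing $n$-tuples $c_{\bar i},c_{\bar j}$, pick a third increasing $n$-tuple $c_{\bar k}$ whose indices lie strictly beyond $\bar i$ and $\bar j$ (available because $I$ has no last element), and use the $A$-indiscernible block extensions of the sequence to chain $c_{\bar i}\equivL_A c_{\bar k}\equivL_A c_{\bar j}$; hence $\phi(c_{\bar i},\D)\iff\phi(c_{\bar j},\D)$.

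The main obstacle is $\ssf{3}\Rightarrow\ssf{1}$. If $\D$ is not Lascar invariant over $A$, some $g\in\Aut(\U/N)$ with $N\supseteq A$ a model satisfies $g[\D]\ne\D$, so some $p\in S(N)$ is split by $\D$, and $p$ is non-algebraic (otherwise $p(\U)\subseteq N$ is pointwise fixed by $\Aut(\U/N)$), giving $|p(\U)|=\kappa$ by saturation. The plan is to exhibit $\kappa$ pairwise distinct conjugates of $\D$ in $o(\D/N)\subseteq o(\D/A)$, contradicting \ssf{3}. Extracting an $N$-indiscernible sequence $\<b_i:i<\kappa\>$ inside $p(\U)$ and picking, for each $i$, an $f_i\in\Aut(\U/N)$ sending a fixed $a\in p(\U)\cap\D$ to $b_i$, one obtains candidate conjugates $f_i[\D]$; the delicate point is a Ramsey/shift argument showing that sufficiently many of these $f_i[\D]$ are distinct rather than collapsing under the permutation action of $\Aut(\U/N)$ on $p(\U)$. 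This is where the split of $p$ by $\D$, the saturation of $\U$, and the inaccessibility of $\kappa$ are fully deployed.
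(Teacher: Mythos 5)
Most of your scheme is sound, and your arguments for \ssf{1}$\IMP$\ssf{2}, \ssf{2}$\IMP$\ssf{3}, \ssf{4}$\IMP$\ssf{5} and \ssf{5}$\IMP$\ssf{1} agree in substance with the paper (for \ssf{5}$\IMP$\ssf{1} the paper realizes your zigzag concretely: a Morley sequence $c$ of a global coheir of $\tp(a/M)=\tp(b/M)$ makes both $a,c$ and $b,c$ into $A$-indiscernible sequences). Your \ssf{1}$\IMP$\ssf{4} via $\Autf(\U/A)$-invariance of the sets $\big\{\bar b:\phi(\bar b,\D)\big\}$ is also correct, though the paper does not need it: it proves \ssf{3}$\IMP$\ssf{4} instead and closes the single cycle \ssf{1}$\IMP$\ssf{2}$\IMP$\ssf{3}$\IMP$\ssf{4}$\IMP$\ssf{5}$\IMP$\ssf{1}.

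The genuine gap is \ssf{3}$\IMP$\ssf{1}, and you have flagged it yourself: nothing in your construction forces the conjugates $f_i[\D]$ to be pairwise distinct. Each $f_i[\D]$ contains $b_i$ because $a\in\D$, but the $b_i$ may all lie in $\D$, and the splitting witness $a'\in p(\U)\sm\D$ never enters your construction, so $f_i[\D]=\D$ for all $i$ is not excluded. Even if you refine the choice so that $f_i$ sends the pair $(a,a')$ to $(b_i,b_{i+1})$, you only learn that $b_{i+1}\in f_{i+1}[\D]\sm f_i[\D]$, i.e.\ that consecutive conjugates differ; this yields two distinct conjugates, not $\kappa$ of them. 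What is missing is a counting device that converts a single disagreement along an indiscernible sequence into $\kappa$ pairwise distinct conjugates. The paper supplies exactly this in its proof of (the contrapositive of) \ssf{3}$\IMP$\ssf{4}: from an $A$-indiscernible sequence $\<c_i:i<\kappa\>$ and a $\phi(x)\in L(A;\D)$ with $\phi(c_0)\niff\phi(c_1)$ it defines the $A$-invariant equivalence relation $E(x,y)$ given by $\psi(x)\iff\psi(y)$ for all $\psi(x)\in L(A;\C)$ and all $\C\in o(\D/A)$; invariance plus indiscernibility propagate $\neg E(c_0,c_1)$ to $\neg E(c_i,c_j)$ for all $i<j<\kappa$, so $E$ has $\kappa$ classes, whereas if $|o(\D/A)|<\kappa$ the number of classes is at most $2^{|L(A)|\cdot|o(\D/A)|}<\kappa$ by inaccessibility. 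You can repair your plan either by importing this argument into \ssf{3}$\IMP$\ssf{1} (note that $\neg$\ssf{1} does produce such a sequence: your split type together with a coheir Morley sequence gives an $A$-indiscernible sequence whose first two terms disagree about membership in $\D$), or more economically by replacing your \ssf{3}$\IMP$\ssf{1} with the paper's \ssf{3}$\IMP$\ssf{4}, which also lets you drop your separate proof of \ssf{1}$\IMP$\ssf{4}.
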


\begin{proof}
The implication \ssf{1}$\IMP$\ssf{2} is clear because all sets in $o(\D/A)$ are Lascar invariant over $A$. To prove \ssf{2}$\IMP$\ssf{3} it suffices to note that there are fewer than $\kappa$ sets that are invariant over $M$. 

We now prove \ssf{3}$\IMP$\ssf{4}. Assume $\neg$\ssf{4}. Then we can find an  $A\mbox{-}$indiscernible sequence $\<c_i:i<\kappa\>$ and a formula $\phi(x)\in L(A;\D)$ such that $\phi(c_0)\niff\phi(c_1)$. Define

\ceq{\hfill E(x,y)}{\IFF}{ \psi(x)\iff\psi(y)}\ \ \ for every $\C\in o(\D/A)$ and every $\psi(x)\in L(A;\C)$.

Then $E(x,y)$ is an $A\mbox{-}$invariant equivalence relation. As $\neg E(c_0,c_1)$, indiscernibility over $A$ implies that $\neg E(c_i,c_j)$ for every $i<j<\kappa$. Then $E(x,y)$ has $\kappa$ equivalence classes. As $\kappa$ is inaccessible, this implies $\neg$\ssf{3}.

The implication \ssf{4}$\IMP$\ssf{5} is trivial. We prove \ssf{5}$\IMP$\ssf{1}. Suppose $a\equiv_Mb$ for some $M\supseteq A$. Let $p(z)$ be a global coheir of $\tp(a/M)=\tp(b/M)$. Let $c=\<c_i:i<\omega\>$ be a Morley sequence of $p(z)$ over $M,a,b$. Then both $a,c$ and $b,c$ are $A\mbox{-}$indiscernible sequences. So from \ssf{5} we obtain $a\in\D\iff c_0\in\D\iff b\in\D$ and, as $M$ is arbitrary, \ssf{1} follows. 
\end{proof}

As the number of $M\mbox{-}$invariant sets is at most $2^{2^{|M|}}$, we obtain the following corollary.

\begin{corollary} For every $\D$ the following are equivalent:
\begin{itemize}
\item[1.] $o(\D/A)$ has cardinality $<\kappa$;
\item[2.] $o(\D/A)$ has cardinality $\le 2^{2^{|L(A)|}}$.\QED
\end{itemize} 
\end{corollary}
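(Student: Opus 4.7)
My plan is to reduce the corollary directly to Proposition~\ref{prop_Lascar_indiscernibles} and to the counting remark that immediately precedes the statement. The direction \ssf{2}$\IMP$\ssf{1} is essentially free: since $\kappa$ is inaccessible and $\kappa>|L|$, we have $|L(A)|<\kappa$ and hence $2^{2^{|L(A)|}}<\kappa$ for every small $A$, so any orbit of cardinality $\le 2^{2^{|L(A)|}}$ automatically has cardinality $<\kappa$.

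For the nontrivial direction \ssf{1}$\IMP$\ssf{2}, the first step is to fix, via downward L\"owenheim--Skolem, a model $M\supseteq A$ of cardinality $\le|L(A)|$; this choice will be what converts the abstract bound into the stated one. Next I would invoke the equivalence \ssf{3}$\IFF$\ssf{1} of Proposition~\ref{prop_Lascar_indiscernibles}, applied to this $M$: the hypothesis $|o(\D/A)|<\kappa$ gives that $\D$ is Lascar invariant over $A$. Lascar invariance over $A$ is clearly preserved under $\Aut(\U/A)$, so every $\C\in o(\D/A)$ is Lascar invariant over $A$ as well, and in particular $M$-invariant by the very definition of Lascar invariance.

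This yields an injection of $o(\D/A)$ into the collection of $M$-invariant subsets of $\U^{|z|}$. The concluding step is the standard counting: every $M$-invariant set is a union of realizations of types in $S_z(M)$, so the number of such sets is bounded by $2^{|S_z(M)|}\le 2^{2^{|L(M)|}}\le 2^{2^{|L(A)|}}$, which is exactly the bound we want.

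I do not anticipate any real obstacle. The only point that needs mild care is ordering the argument correctly: one must pick $M$ of size $\le|L(A)|$ \emph{before} appealing to Proposition~\ref{prop_Lascar_indiscernibles}, so that the invariant-set count has the right exponent; everything else is bookkeeping on already-established facts.
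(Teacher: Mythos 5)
Your proof is correct and follows exactly the route the paper intends: the paper derives the corollary from Proposition~\ref{prop_Lascar_indiscernibles} (the equivalence of Lascar invariance with $|o(\D/A)|<\kappa$ and with $M$-invariance of every conjugate of $\D$) together with the count of $M$-invariant sets for a model $M\supseteq A$ of size $\le|L(A)|$, which is the same counting already used in Proposition~\ref{prop_numero_quasi_invarianti}. The easy direction via inaccessibility of $\kappa$ is also as intended, so there is nothing to add.
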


\section{Dividing}
Though Definition~\ref{def_loc_cover} below does not make any assumptions on $\B$ and $u\subseteq\P\big(\U^{|z|}\big)$, it yields a workable notion only when $\B$ is invariant and $u$ is closed in a sense that we will explain. Moreover, for the proof of Lemma~\ref{Lem_dividing_indiscernibles} we need $\kappa$ to be a Ramsey cardinal, so this will a blanket assumption throughout this section. 

\begin{definition}\label{def_loc_cover} Let $u\subseteq\P\big(\U^{|z|}\big)$ and let $\B\subseteq\U^{|z|}$. We say that $u$ \emph{locally covers\/} $\B$ if for every $\K\subseteq\B$ of cardinality $\kappa$ and every integer $k$ there is a $\D\in u$ such that $k\le|\K\cap\D|$.\QED
\end{definition}

The subsets of $\P(\U^{|z|})$ that are definable by formulas $\phi(\X)\in L(A)$ form a base of clopen sets for a topology. The proposition below implies that this topology is compact.

\begin{proposition}
Let $p(\X)\subseteq L(A)$ be finitely consistent, that is, for every $\phi(\X)$ conjunction of formulas in $p(\X)$ there is a $\D\subseteq\U^{|z|}$ such that $\phi(\D)$. Then there is a set $\C$ such that $p(\C)$.
\end{proposition}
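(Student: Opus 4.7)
The plan is to reduce the statement to ordinary first-order compactness by reifying the second-order variable $\X$ as a fresh $|z|\mbox{-}$ary predicate symbol $P$. Let $\phi\mapsto\phi^*$ be the translation of $L(A)$-formulas in $\X$ to first-order $L(A)\cup\{P\}$-sentences obtained by replacing each atom $t\in\X$ by $P(t)$; since $\X$ occurs only as a second-order parameter in atomic formulas and $p(\X)$ uses only first-order logical connectives, this is well defined and satisfies $\phi(\D)\iff\<\U,\D\>\models\phi^*$ for every $\D\subseteq\U^{|z|}$, by induction on complexity. Writing $p^*(P)=\{\phi^*:\phi\in p\}$, the target conclusion $p(\C)$ becomes $\<\U,\C\>\models p^*(P)$.

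Next I would consider the $L(A)\cup\{P\}$-theory $T^*:=\Th_{L(A)}(\U)\cup p^*(P)$. Any finite $T_0^*\subseteq T^*$ is satisfied by some expansion $\<\U,\D\>$: the $\Th_{L(A)}(\U)$-part holds in $\<\U,\D\>$ because it does not mention $P$, while the finite $p^*$-part is realised by any $\D$ witnessing finite consistency of the corresponding conjunction from $p$. Hence by ordinary compactness $T^*$ is consistent. Since $A$ is small and $\kappa$ is uncountable inaccessible with $\kappa>|L|$, one has $|L(A)\cup\{P\}|<\kappa$, so there is a saturated model $\<M,P^M\>\models T^*$ of cardinality $\kappa$.

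The final step is a back-and-forth in the spirit of the second assertion of Proposition~\ref{prop_e(D/A_o(D/A)}. The $L$-reduct $M$ is still $\kappa$-saturated, since every $L(A)$-type is in particular an $L(A)\cup\{P\}$-type; it has cardinality $\kappa$ and is $L(A)$-elementarily equivalent to $\U$. As $\U$ is also saturated of cardinality $\kappa$, there is an $L$-isomorphism $f\colon M\to\U$ that fixes $A$ pointwise. Setting $\C:=f[P^M]\subseteq\U^{|z|}$, the map $f$ lifts to an isomorphism $\<M,P^M\>\cong\<\U,\C\>$ of $L(A)\cup\{P\}$-structures, whence $\<\U,\C\>\models p^*(P)$, i.e.\ $p(\C)$.

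The only place where care is required is the cardinality accounting in the compactness step: one needs a saturated model of $T^*$ of exactly the cardinality of $\U$, and this uses crucially that $\kappa$ is uncountable inaccessible and larger than $|L|$ (together with the fact that $A$ is small). Everything else is bookkeeping; no genuine obstacle arises from the second-order presentation, because by hypothesis $p(\X)$ is built using first-order logical connectives only, so the predicate $P$ faithfully simulates $\X$.
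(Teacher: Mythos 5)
Your argument is correct, and it is exactly the route the paper itself recommends: the paper's proof is a one-line appeal to resplendency of saturated models, but it explicitly adds that one may instead ``prove it directly by adapting the argument used in the proof of Proposition~\ref{prop_e(D/A_o(D/A)}'', which is precisely your compactness-plus-saturated-model-plus-isomorphism argument spelled out. No gaps; the cardinality bookkeeping you flag is handled by the standing assumption that $\kappa$ is inaccessible and larger than $|L|$ with $A$ small.
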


\begin{proof}
The proposition follows from the fact that every saturated model is resplendent, see~\cite[Th\'eor\`eme 9.17]{Poi}. But the reader may prefer to prove it directly by adapting the argument used in the proof of Proposition~\ref{prop_e(D/A_o(D/A)}.
\end{proof}

Notice that the topology introduced above is not T$_0$ because there are $\C\neq\D$ such that \mbox{$\C\equiv\D$}. However, it is immediate that taking the Kolmogorov quotient (i.e.\@ quotienting by the equivalence relation $\equiv$) gives a Hausdorff topology. Then there is no real need to distinguish between \textit{compactness\/} and  \textit{quasi-compactness}. 

We will say that the set $u\subseteq\P\big(\U^{|z|}\big)$ is \emph{closed\/} if it is closed in the topology introduced above. In other words, $u$ is closed if  $u=\big\{\D\, :\, p(\D)\big\}$ for some $p(\X)\subseteq L$.

\begin{remark} We may read Definition~\ref{def_loc_cover} as a generalization of non-dividing. Let us recall the definition of dividing. We say that the formula $\phi(x,b)$ divides over $A$ if there there is an infinite set $\K\subseteq o(b/A)$ such that $\{\phi(x,c)\,:\,c\in\K\}$ is $k\mbox{-}$inconsistent for some $k$. By compactness, there is no loss of generality if we require $|\K|=\kappa$. Let $u\subseteq\P(\U^{|z|})$ contain the externally definable sets of sort $\phi(x,z)$. Then the requirement that $\{\phi(x,c)\,:\,c\in\K\}$ is $k\mbox{-}$inconsistent can be rephrased as $|\K\cap\D|<k$ for every $\D\in u$. So we may conclude that the following are equivalent:
\begin{itemize}
\item[1.] the formula $\phi(x,b)$ does not divide over $A$;
\item[2.] $u$ locally covers $o(b/A)$.
\end{itemize}
Incidentally, note that $o(b/A)$ is $A\mbox{-}$invariant and that $u$ is a closed set.\QED
\end{remark}

We now need to use second-order quantifiers. The set of formulas containing second-order quantifiers is denoted by  \emph{$L^2\!$}, or \emph{$L^2\!(A;\D)$\/} when parameters occur. Second-order quantifiers are interpreted to range over $\P(\U^{|z|})$. The following fact is immediate but noteworthy.

\begin{fact}
Every formula $\phi(x)\in L^2\!(A)$ is $A\mbox{-}$invariant and consequently any $A\mbox{-}$indiscernible sequence is indiscernible in $L^2\!(A)$.\QED
\end{fact}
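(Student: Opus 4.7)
The plan is to prove the first claim by induction on the complexity of formulas in $L^2$, exploiting the fact that any $f\in\Aut(\U/A)$ induces a bijection of $\P(\U^{|z|})$ via $\C\mapsto f[\C]$. Specifically, I would fix an arbitrary $f\in\Aut(\U/A)$ and establish the following auxiliary statement by induction on $\theta$: for every parameter-free $L^2$-formula $\theta(x_1,\ldots,x_n;\X_1,\ldots,\X_m)$, every $a_1,\ldots,a_n\in\U$, and every $\D_1,\ldots,\D_m\subseteq\U^{|z|}$,
\[\U\models\theta(a_1,\ldots,a_n;\D_1,\ldots,\D_m)\IFF\U\models\theta(f(a_1),\ldots,f(a_n);f[\D_1],\ldots,f[\D_m]).\]
The base cases are immediate: atomic $L$-formulas are preserved because $f$ is an $L$-automorphism, and atomic formulas $t\in\X$ are preserved because $f$ restricts to a bijection of $\U^{|z|}$. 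Boolean connectives and first-order quantifiers go through as usual, again because $f$ is a bijection of $\U$. The key second-order case $\theta=\E\Y\,\theta'$ reduces to observing that as $\C$ ranges over $\P(\U^{|z|})$ so does $f[\C]$.

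To deduce the stated claim, apply the auxiliary statement to $\phi(x)=\theta(x;c_1,\ldots,c_n)$, where $c_1,\ldots,c_n$ enumerate the parameters from $A$ appearing in $\phi$. Since $f$ fixes $A$ pointwise, $f(c_i)=c_i$ for each $i$, so one obtains $\U\models\phi(a)\IFF\U\models\phi(f(a))$, i.e.\ the set $\phi(\U)$ is invariant over $A$.

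For the consequence, suppose $\<a_i:i\in I\>$ is $A$-indiscernible and let $i_1<\cdots<i_k$ and $j_1<\cdots<j_k$ be two increasing tuples in $I$. By $A$-indiscernibility in $L(A)$ we have $\tp(a_{i_1},\ldots,a_{i_k}/A)=\tp(a_{j_1},\ldots,a_{j_k}/A)$, and by saturation and strong homogeneity of $\U$ these tuples are conjugate by some $f\in\Aut(\U/A)$. The first claim, applied to any $\phi(x_1,\ldots,x_k)\in L^2(A)$, then yields $\phi(a_{i_1},\ldots,a_{i_k})\IFF\phi(a_{j_1},\ldots,a_{j_k})$, which is indiscernibility in $L^2(A)$.

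There is no real obstacle: the content of the fact is precisely that the $\Aut(\U/A)$-action on $\U$ lifts canonically to an action on $\P(\U^{|z|})$, and second-order quantifiers (being over \emph{all} subsets) are insensitive to this lift. If instead a distinguished predicate for a non-invariant $\D$ were adjoined as a parameter, the argument would break at the atomic case $t\in\D$ since $f[\D]\neq\D$ in general.
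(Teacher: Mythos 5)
Your proof is correct: the paper states this Fact without proof, declaring it ``immediate,'' and your induction on formula complexity (lifting the $\Aut(\U/A)$-action to $\P(\U^{|z|})$ and checking the second-order quantifier case) together with the homogeneity argument for the indiscernibility consequence is exactly the routine verification being omitted. Nothing is missing and nothing diverges from the paper's intent.
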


\begin{lemma}\label{Lem_dividing_indiscernibles}
Let $u\subseteq\P\big(\U^{|z|}\big)$ be a closed set and let $\B\subseteq\U^{|z|}$ be an $A\mbox{-}$invariant set. Then the following are equivalent:
\begin{itemize}
\item[1.] $u$ locally covers $\B$;
\item[2.] every $A\mbox{-}$indiscernible sequence $\<a_i:i<\omega\>\subseteq\B$ is contained in some $\D\in u$.
\end{itemize}
\end{lemma}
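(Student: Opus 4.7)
The plan is to treat the two implications separately, using compactness of the second-order topology (the proposition just above) together with local covering for $(1)\IMP(2)$, and the Ramsey hypothesis on $\kappa$ for $(2)\IMP(1)$.

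For $(2)\IMP(1)$, given $\K\subseteq\B$ of cardinality $\kappa$ and $k<\omega$, I would enumerate $\K$ as $\<c_\alpha : \alpha<\kappa\>$ and colour each strictly increasing finite tuple of indices by the $A$-type of the corresponding tuple in $\K$. Since $A$ is small and $\kappa$ is inaccessible, there are at most $2^{|L(A)|}<\kappa$ colours, so Ramseyness of $\kappa$ yields a homogeneous subset of size $\kappa$; its first $\omega$ elements give an $A$-indiscernible $\<a_i:i<\omega\>\subseteq\K\subseteq\B$. By hypothesis (2) there is $\D\in u$ containing the whole sequence, so $\{a_0,\dots,a_{k-1}\}\subseteq\K\cap\D$ and $|\K\cap\D|\ge k$, as required.

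For $(1)\IMP(2)$, given an $A$-indiscernible $\<a_i:i<\omega\>\subseteq\B$, I would first use saturation to extend it to an $A$-indiscernible $\<a_i:i<\kappa\>$, which stays inside $\B$ by $A$-invariance of $\B$. Writing $u=\{\D:p(\D)\}$ with $p(\X)\subseteq L$, the goal becomes realising the type $p(\X)\cup\{a_i\in\X : i<\omega\}$; by compactness of the second-order topology it is enough to show that, for every $n<\omega$, there is $\D'\in u$ containing $a_0,\dots,a_{n-1}$. Applying local covering to $\K=\{a_i:i<\kappa\}\subseteq\B$ with threshold $n$ produces some $\D\in u$ together with indices $i_0<\cdots<i_{n-1}$ such that $a_{i_j}\in\D$. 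By $A$-indiscernibility, an $A$-automorphism $g$ sends $(a_{i_0},\dots,a_{i_{n-1}})$ to $(a_0,\dots,a_{n-1})$; since $p(\X)\subseteq L$ is parameter-free, $u$ is $\Aut(\U)$-invariant, so $g[\D]\in u$, and $g[\D]$ contains $a_0,\dots,a_{n-1}$.

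The main obstacle I anticipate is precisely this final invariance step. The transfer via $g$ is clean only because "$p(\X)\subseteq L$" should be read as parameter-free, making $u$ invariant under every automorphism. If one instead wanted to allow the formulas in $p$ to carry parameters $\bar c$ outside $A$, the argument would have to branch: before invoking local covering, one would use the Ramsey hypothesis to extract from $\<a_i:i<\kappa\>$ an $A\bar c$-indiscernible sub-sequence and work with it throughout, finishing with an $A\bar c$-automorphism (which now fixes $\bar c$) in place of $g$. This is the only point in $(1)\IMP(2)$ where the Ramsey hypothesis would actually be indispensable.
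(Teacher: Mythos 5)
Your proof is correct and follows essentially the same route as the paper's: Ramsey extraction of an $A$-indiscernible sequence from $\K$ for one direction, and extension to length $\kappa$ plus compactness for the other, with your explicit automorphism $g$ playing exactly the role of the paper's observation that $A$-indiscernible sequences are indiscernible in $L^2(A)$ (both hinge on $p(\X)$ being parameter-free, the subtlety you correctly flag at the end). The only difference is cosmetic: you argue both implications directly where the paper argues by contraposition.
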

\begin{proof}

\ssf{1}$\IMP$\ssf{2}. Let $p(\X)\in L$ be such that $u=\big\{\D\,:\,p(\D)\big\}$. Assume $\neg$\ssf{2} and fix an  $A\mbox{-}$indiscernible sequence $\<a_i:i<\omega\>\subseteq\B$ such that $p(\X)\ \cup\ \{a_i\in\X\,:\,i<\omega\}$ is inconsistent. By compactness there are some $i_1,\dots, i_k$ and some $\phi(\X)\in p$ such that 

\hfil$\displaystyle\A\X\ \Big[\phi(\X)\imp\neg\bigwedge^k_{n=1}a_{i_n}\in\X\Big]$.

Extend $\<a_i:i<\omega\>$ to an $A\mbox{-}$indiscernible sequence $\<a_i\,:\,i<\kappa\>$. By indiscernibility, every $\D\in u$ contains fewer than $k$ elements of $\{a_i:i<\kappa\}\subseteq\B$. Hence $\neg$\ssf{1}.

\ssf{2}$\IMP$\ssf{1}. Assume $\neg$\ssf{1} and fix $\K\subseteq\B$ of cardinality $\kappa$ and an integer $k$ such that $|\K\cap\D|<k$ for every $\D\in u$. As $\kappa$ is a Ramsey cardinal, there is an $A\mbox{-}$indiscernible $\<a_i:i<\kappa\>\subseteq\K$. Then $\<a_i:i<\kappa\>$ may not be contained in any $\D\in u$, hence $\neg$\ssf{2}.
\end{proof}

We say that $\D$ is \emph{pseudo-invariant\/} over $A$ if $e(\D)$ locally covers $o(b/A)$ for every $b\in\D$.

\begin{proposition}\label{prop_Lascar_invariant->nondivide}
If $\D$ is Lascar invariant over $A$, then for every $\phi(w)\in L(A;\D)$ the set $\phi(\U)$ is pseudo-invariant over $A$.
\end{proposition}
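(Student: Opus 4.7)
The plan is to apply Lemma~\ref{Lem_dividing_indiscernibles} with $u = e(\phi(\U))$ and with the orbit $o(b/A)$ playing the role of the invariant base, for an arbitrary $b \in \phi(\U)$. Both hypotheses of the lemma are in place: the set $e(\phi(\U))$ is closed, being defined by the type $\big\{\psi(\X) \iff \psi(\phi(\U)) \ :\ \psi(\X)\in L\big\}$, and $o(b/A)$ is $A$-invariant as an orbit under $\Aut(\U/A)$. Consequently, it suffices to show that every $A$-indiscernible sequence $\<a_i : i<\omega\> \subseteq o(b/A)$ lies inside some $\C \in e(\phi(\U))$.

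Given such a sequence, my idea is to transport the whole picture back to $b$ by an automorphism, rather than try to work at the $a_i$ side. Since $a_0 \equiv_A b$, pick $f \in \Aut(\U/A)$ with $f(b) = a_0$ and set $b_i := f^{-1}(a_i)$, so that $\<b_i : i<\omega\>$ is an $A$-indiscernible sequence starting with $b_0 = b \in \phi(\U)$. I would then extend this to an endless $A$-indiscernible sequence and invoke Lascar invariance of $\D$ over $A$: by clause \ssf{4} of Proposition~\ref{prop_Lascar_indiscernibles}, the extended sequence is indiscernible in $L(A;\D)$. In particular $\phi(b_0) \iff \phi(b_i)$ for every $i$, and since $b \in \phi(\U)$ this yields $b_i \in \phi(\U)$ for all $i$. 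Applying $f$ then gives $a_i = f(b_i) \in f(\phi(\U))$ for every $i$, and $\C := f(\phi(\U)) \equiv \phi(\U)$ because $f$ is an automorphism of $\U$, so $\C$ witnesses the required local covering.

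The one subtlety worth flagging, and the only real obstacle to an even quicker argument, is that Lascar invariance of $\D$ over $A$ does \emph{not} upgrade $\phi(\U)$ to an $A$-invariant set: $\phi(\U)$ is guaranteed to be invariant only over some model $M \supseteq A$, so it need not be true that $o(b/A) \subseteq \phi(\U)$, and one cannot simply take $\C = \phi(\U)$. Conjugating by $f \in \Aut(\U/A)$ converts this mismatch into a harmless change from $\phi(\U)$ to its automorphic image $f(\phi(\U))$, which still lies in $e(\phi(\U))$; after this, Lascar invariance of $\D$ applies cleanly on the $b$ side and finishes the proof.
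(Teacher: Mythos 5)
Your proof is correct and takes essentially the same route as the paper: conjugate the indiscernible sequence in $o(b/A)$ back to one starting at $b$ via some $f\in\Aut(\U/A)$, apply Lascar invariance through Proposition~\ref{prop_Lascar_indiscernibles}, and conclude that the automorphic image of $\phi(\U)$ lies in $e(\phi(\U))$ and contains the sequence, so Lemma~\ref{Lem_dividing_indiscernibles} applies. The subtlety you flag --- that $\phi(\U)$ need not be $A$-invariant, so one cannot take $\C=\phi(\U)$ itself --- is precisely why the paper likewise passes to the conjugate $f^{-1}[\phi(\U)]$.
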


\begin{proof}
Fix $\phi(w)\in L(A;\D)$ and let  $b\in\phi(\U)$. Let $\<a_i:i<\omega\>\subseteq o(b/A)$ be an indiscernible sequence and fix some $f\in\Aut(\U/A)$ such that $fa_0=b$. Then  $\<fa_i:i<\omega\>$ is indiscernible in $L(A;\D)$ by Proposition~\ref{prop_Lascar_indiscernibles}. Then $\<fa_i:i<\omega\>\subseteq\phi(\U)$. Hence $\<a_i:i<\omega\>\subseteq f^{-1}[\phi(\U)]$. Clearly, $f^{-1}[\phi(\U)]\in e(\phi(\U))$, so the proposition follows from Lemma~\ref{Lem_dividing_indiscernibles}.
\end{proof}

\begin{proposition}\label{prop_Lascar_inv=pseudo-inv}
Let $e(\D)$ have finite \vc-dimension. Then the following are equivalent:\nobreak
\begin{itemize}
\item[1.] $\D$ is Lascar invariant over $A$;
\item[2.] $\phi(\U)$ is pseudo-invariant over $A$ for every $\phi(w)\in L(A;\D)$;
\item[3.] $\D\times\!\neg\D$ is pseudo-invariant over $A$.
\end{itemize}
\end{proposition}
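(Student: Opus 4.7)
The plan is to close the cycle $\ssf{1}\IMP\ssf{2}\IMP\ssf{3}\IMP\ssf{1}$. The implication $\ssf{1}\IMP\ssf{2}$ is already recorded as Proposition~\ref{prop_Lascar_invariant->nondivide}. For $\ssf{2}\IMP\ssf{3}$ it suffices to specialise~\ssf{2} to the $L(\0;\D)$-formula $\phi(w_1,w_2):=(w_1\in\D)\wedge(w_2\notin\D)$, whose solution set in $\U^{2|z|}$ is exactly $\D\times\neg\D$.

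The substantive implication is $\ssf{3}\IMP\ssf{1}$, which we argue by contraposition via Proposition~\ref{prop_Lascar_indiscernibles}(\ssf{5}). Assume $\D$ is not Lascar invariant over $A$; we may harmlessly suppose also that $\D$ is neither empty nor full, else~\ssf{1} is trivial. Then there is an $A$-indiscernible sequence $\<c_i:i<\omega\>$ with $c_0\in\D$ and $c_1\notin\D$. Forming $b_i:=(c_{2i},c_{2i+1})$ yields an $A$-indiscernible sequence with $b_0=(c_0,c_1)\in\D\times\neg\D$, and each $b_i$ lies in $o(b_0/A)$ since $(c_{2i},c_{2i+1})\equiv_A(c_0,c_1)$ by indiscernibility. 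Hypothesis~\ssf{3}, read through Lemma~\ref{Lem_dividing_indiscernibles} with $u=e(\D\times\neg\D)$ and $\B=o(b_0/A)$, then supplies some $\C\in e(\D\times\neg\D)$ containing every $b_i$.

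The key step is to extract from $\C$ a subset $\C_1\subseteq\U^{|z|}$ with $\C_1\equiv\D$ and $\C=\C_1\times\neg\C_1$. One sets $\C_1:=\{x:\exists y\,(x,y)\in\C\}$; the first-order sentences in the parameter $\X$ expressing that $\X$ coincides with the product of its two projections and that these projections are complementary in $\U^{|z|}$ all hold of $\D\times\neg\D$ (using $\D\neq\0$ and $\D\neq\U^{|z|}$), and therefore of $\C$, giving $\C=\C_1\times\neg\C_1$. Translating any first-order sentence about $\C_1$ into one about $\C$ by substituting $\exists v\,(u,v)\in\X$ for each atomic occurrence of $u\in\X$ then yields $\C_1\equiv\D$, so $\C_1\in e(\D)$.

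With the factorisation in hand, $c_{2i}\in\C_1$ and $c_{2i+1}\notin\C_1$ for all $i$, so $\<c_i\in\C_1:i<\omega\>$ is the non-convergent alternation $T,F,T,F,\ldots$. On the other hand $o(\C_1/A)\subseteq e(\C_1)=e(\D)$ has finite \vc-dimension by hypothesis, so Lemma~\ref{lem_nip->indiscernible_converge} applied to the $A$-indiscernible $\<c_i\>$ forces $\<c_i\in\C_1\>$ to converge, yielding the required contradiction. The main obstacle is precisely the factorisation step: one must isolate first-order sentences in $\X$ that simultaneously pin down the product structure of $\C$ and allow $\C_1\equiv\D$ to be read off from $\C\equiv\D\times\neg\D$.
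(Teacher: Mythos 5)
Your proof is correct and follows essentially the same route as the paper: \ssf{1}$\IMP$\ssf{2} via Proposition~\ref{prop_Lascar_invariant->nondivide}, \ssf{2}$\IMP$\ssf{3} by specialisation, and \ssf{3}$\IMP$\ssf{1} by contraposition through Proposition~\ref{prop_Lascar_indiscernibles}, Lemma~\ref{Lem_dividing_indiscernibles} and Lemma~\ref{lem_nip->indiscernible_converge}. The only difference is that you make explicit the factorisation $\C=\C_1\times\!\neg\C_1$ with $\C_1\equiv\D$ (together with the edge case $\D\in\{\0,\U^{|z|}\}$ needed for it), a step the paper passes over silently when it asserts that the set supplied by Lemma~\ref{Lem_dividing_indiscernibles} has the form $\C\times\!\neg\C$ for some $\C\equiv\D$.
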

\begin{proof} \ssf{1}$\IMP$\ssf{2} holds for any $\D$ by Proposition~\ref{prop_Lascar_invariant->nondivide} and \ssf{2}$\IMP$\ssf{3} is obvious. 

\ssf{3}$\IMP$\ssf{1}. Assume $\neg$\ssf{1}. By Proposition~\ref{prop_Lascar_indiscernibles}, there is an $A\mbox{-}$indiscernible sequence $\<a_i:i<\omega\>$ such that $a_0\in\D\niff a_1\in\D$, say $a_0\in\D$ and $a_1\notin\D$. Assume \ssf{2} for a contradiction. Then by Lemma~\ref{Lem_dividing_indiscernibles} there is $\C\equiv\D$ such that $\<a_{2i}a_{2i+1}\,:\, i<\omega\>\subseteq\C\times\!\neg\C$.  By Lemma~\ref{lem_nip->indiscernible_converge}, $e(\C)=e(\D)$ has infinite \vc-dimension contradicting the assumptions. 
\end{proof}

The hypothesis of finite \vc-dimension is necessary. Assume $T$ is the theory of dense linear orders without endpoints. Let $\D$ be a discretely ordered subset of $\U$ of cardinality $\kappa$. Then $\D$ is not invariant and $e(\D)$ has infinite \vc-dimension. One can verify that $\D\times\!\neg\D$ is pseudo-invariant over $\0$ directly from the definition.

It is well known that under the hypothesis that $T$ is \nip, Lascar invariance of global types is equivalent to non-dividing (equivalently, non-forking), see \cite[Proposition 5.21]{Sim}. Then, when $T$ is \nip, a global type $p(x)$ does not divide over $A$ if and only if $\D_{p,\phi}\times\!\neg\D_{p,\phi}$ is pseudo-invariant over $A$ for every $\phi(x,z)$.

However, pseudo-invariance is too strong a requirement to coincide with non-dividing in general. A counter-example may be found even when $T$ is simple. Let $T$ be the theory of the random graph and let $\D$ be a complete subgraph of $\U$. Let $p(x)$ be the unique global type that contains 

\hfil$\big\{r(x,a)\,:\, a\in\D\big\}\ \cup\ \big\{\neg r(x,a)\,:\,a\notin\D\big\}\ \cup\ \big\{x\neq a\,:\,a\in\U\big\}$. 

Then $p(x)$ does not fork over the empty set. On the other hand, $\D$ is not pseudo-invariant: let $\<a_i:i<\omega\>$ be an indiscernible sequence such that $a_0\in\D \wedge\neg r(a_0,a_1)$. As every $\C\equiv\D$ is a complete graph, no such $\C$ may contain $\<a_i:i<\omega\>$.

\vfil
\hfill\begin{minipage}{28ex}
Domenico Zambella\\
Dipartimento di Matematica\\
Universit\`a di Torino\\
via Calrlo Alberto 10\\
10123 Torino\\
\texttt{domenico.zambella@unito.it}
\end{minipage}

\end{document}